\newtheorem{theorem}{Theorem}[section]
\newtheorem{lemma}[theorem]{Lemma}
\newtheorem{proposition}[theorem]{Proposition}
\newtheorem{corollary}[theorem]{Corollary}
\newtheoremstyle{notauto}{}{}{\itshape}{}{\bfseries}{.}{0.5em}{\thmnote{#3}}
\theoremstyle{notauto}
\theoremstyle{definition}
\newtheorem{definition}[theorem]{Definition}
\theoremstyle{remark}
\newtheorem{remark}[theorem]{Remark}
\begin{document}
\title{Commuting graph of $A$-orbits}
\author{\.{I}sma\.{I}l \c{S}. G\"{u}lo\u{g}lu}
\address{\.{I}sma\.{I}l \c{S}. G\"{u}lo\u{g}lu, Department of Mathematics, Do%
\u{g}u\c{s} University, Istanbul, Turkey}
\email{iguloglu@dogus.edu.tr}
\author{G\"{u}l\.{I}n Ercan$^{*}$}
\address{G\"{u}l\.{I}n Ercan, Department of Mathematics, Middle East
echnical University, Ankara, Turkey}
\email{ercan@metu.edu.tr}
\thanks{$^{*}$Corresponding author}
\subjclass[2000]{20D10, 20D15, 20D45}
\keywords{commuting graph, group action, complete graph, triangle free graph, isolated vertex}
\maketitle

\begin{abstract}
Let $A$ be a finite group acting by automorphisms on the finite group $G$.
We introduce the commuting graph $\Gamma (G,A)$ of this action and study
some questions related to the structure of $G$ under certain graph
theoretical conditions on $\Gamma (G,A)$.
\end{abstract}

\section{introduction}

Throughout the article all groups are finite. There have been a lot of research to
investigate the effect of the commutativity relation on the structure of a
group. In \cite{Ab} the noncommuting graph of a group $G$ was introduced as
the simple graph with the vertex set $G\setminus Z(G)$ where two distinct vertices 
$x$ and $y$ forming an edge $\{x,y\}$ if and only if they do not commute.
This is the complementary graph of the so called commuting graph of $G.$ The
graph theoretical invariants of such graphs and also the characterization of
groups with a given commuting or noncommuting graph have been studied
extensively by various authors (e.g \cite{Ab}, \cite{Dar}, \cite{SW}, \cite%
{CP}, \cite{PaMo}).

In the present paper we introduce a generalization of these graphs, namely
we define \textit{the commuting graph of $A$-orbits on $G$} as follows:

\begin{definition}
Let $A$ be a group acting by automorphisms on the group $G$. The commuting
graph $\Gamma(G,A)$ of this action is the graph with vertex set $%
V(\Gamma(G,A))=\{x^{A}: x\in G\setminus \{1\}\}$, the set of all $A$-orbits on $%
G\setminus \{1\}$, where two distinct vertices $\mathcal{O}$ and $\mathcal{%
O^{\prime}}$ are joined by an edge (written $\mathcal{O}\sim\mathcal{%
O^{\prime}}$) if and only if there exist $x\in\mathcal{O}$ and $y\in\mathcal{%
O^{\prime}}$ such that $xy=yx$.
\end{definition}

Clearly we have $\Gamma(G,A)=\Gamma(G,A/C_{A}(G)).$ In case $A$ is trivial, $%
\Gamma(G,A)$ is the commuting graph of $G$ as we define it, while the
standard commuting graph of $G$ is the induced subgraph of $\Gamma(G,\{1\})$%
\ on the subset $G\backslash Z(G)$ of the vertex set $G\setminus \{1\}.$ If $A$
is equal to the group of inner automorphisms of $G,$ then $\Gamma(G,A)$ is
exactly the commuting graph of nontrivial conjugacy classes of $G$
introduced by Herzog et al. in \cite{HeLoMa}.

Let $(V,E)$ be a simple graph. For any partition $\overline{V}=\{V_{1},V_{2},%
\ldots,V_{r},\ldots\}$ of $V$ one can define a new
graph $(\overline{V},\overline{E})$, which we call the quotient graph modulo 
$\overline{V}$, where $\{V_{i},V_{j}\}$ is an edge in $\overline{E}$ if and
only if there exist $x\in V_{i}$ and $y\in V_{j}$ such that $\{x,y\}$ is an
edge in $E$. This method of constructing a new graph from a given graph can
be used to explain the relation between several graphs associated to a group. For
example if one consider the commuting graph of a group $G$ as the simple
graph $\Gamma (G)=(V,E)$ with vertex set $V=G\backslash\left\{ 1\right\} $
where two distinct vertices $g$ and $h$ form and edge if and only if $gh=hg$ then $%
\Gamma(G,A)$ is by definition the quotient graph modulo the partition $%
\left\{ x^{A}:x\in G\backslash\left\{ 1\right\} \right\} $ of $V.$ More
generally if $A\leq B\leq AutG$ then $\Gamma(G,B)$ is the quotient of $%
\Gamma(G,A)$ modulo the partition of $V(\Gamma(G,B))$ of $V(\Gamma(G,A))$ as
the $B$-orbit $x^{B}$ is a union of some $A$-orbits for any $x\in
G\backslash\left\{ 1\right\}. $

There is another graph associated to a given group $G$, namely the Gr\"
unberg-Kegel graph (the prime graph) of $G$ denoted by $GK(G)$ the vertex
set of which is the set $\pi(G)$ of prime divisors of the order of $G$ where
two distinct vertices $p$ and $q$ are joined by an edge if $G$ contains an
element of order $pq.$ Since there exists an element in $G$ of order $pq$ if
and only if there are elements $x$ and $y$ in $G$ of orders $p$ and $q$
respectively, such that $xy=yx$, the Gr\" unberg-Kegel graph carries a lot
of information related to the relation of commutativity and hence is closely
related to $\Gamma(G,A).$ Let $V_{0}$ denote the set of $A$-orbits of
elements of prime order in the group $G$ and let $%
\Gamma=(V_{0},E_{0}) $ be the subgraph induced from $\Gamma(G,A)$ on $V_{0}$%
. It should be noted that the quotient graph $(\overline{V}_{0},\overline{E}%
_{0})$ modulo the partition $\overline{V}_{0}=\{V_{p}:p\in\pi(G)\},$ where $%
V_{p}$ is the set of $A$-orbits in $G$ of elements of order $p,$ is
isomorphic to $GK(G).$

This first work on the commuting graph associated to a group action is
essentially devoted to the study of some questions related to the structure
of $G$ under certain graph theoretical conditions on $\Gamma (G,A)$. In
Section $2$ we study the connectedness of $\Gamma (G,A)$ when $G$ is
solvable. In Section $3$ we investigate the case where $\Gamma (G,A)$
contains a complete vertex, that is, a vertex $z^{A}$ which is adjacent to
every other vertex. As a dual concept, in Section $4$, we consider the case
where $\Gamma (G,A)$ contains an isolated vertex. And finally in the last
section we handle triangle free commuting graphs of $A$-orbits.

\section{connectedness}

In this section we study the connectedness of $\Gamma(G,A)$ if $G$ is a
solvable group.

\begin{lemma}
(i) Let $B\leq A$. If $\Gamma(G,B)$ is connected of diameter $d$ then $%
\Gamma(G,A)$ is connected of diameter at most $d$.\newline
(ii) If $\Gamma(G,B)$ has $m$ connected components then $\Gamma(G,A)$ has at
most $m$ connected components.
\end{lemma}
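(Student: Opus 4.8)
The plan is to exploit the fact, already recorded in the introduction, that when $B \le A$ the graph $\Gamma(G,A)$ is the quotient of $\Gamma(G,B)$ modulo the partition of $V(\Gamma(G,B))$ whose blocks group together those $B$-orbits lying in a common $A$-orbit. Concretely, since every $A$-orbit is a union of $B$-orbits, there is a well-defined surjection $\phi \colon V(\Gamma(G,B)) \to V(\Gamma(G,A))$ sending a $B$-orbit $x^B$ to the $A$-orbit $x^A$ containing it. First I would verify that $\phi$ respects adjacency in the weak sense needed for a quotient: if $x^B \sim y^B$ in $\Gamma(G,B)$, then there are $x' \in x^B$ and $y' \in y^B$ with $x'y' = y'x'$; as $x' \in x^A$ and $y' \in y^A$, either $x^A = y^A$ or $x^A \sim y^A$ in $\Gamma(G,A)$. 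Thus $\phi$ carries any path in $\Gamma(G,B)$ to a walk in $\Gamma(G,A)$ in which consecutive vertices either coincide or are adjacent.

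For (i), assume $\Gamma(G,B)$ is connected of diameter $d$ and take two distinct vertices $\mathcal{O}, \mathcal{O}'$ of $\Gamma(G,A)$. Choosing $x, y$ with $x^A = \mathcal{O}$ and $y^A = \mathcal{O}'$, the vertices $x^B, y^B$ of $\Gamma(G,B)$ are distinct (they lie in different $A$-orbits) and so are joined by a path of length at most $d$. Applying $\phi$ and deleting repeated consecutive vertices yields a walk from $\mathcal{O}$ to $\mathcal{O}'$ of length at most $d$, which contains a path of length at most $d$. Hence $\Gamma(G,A)$ is connected of diameter at most $d$.

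For (ii), let $C_1, \ldots, C_m$ be the connected components of $\Gamma(G,B)$. Since $\phi$ maps connected subgraphs to connected ones (by the walk argument above), each image $\phi(C_i)$ lies inside a single connected component of $\Gamma(G,A)$; write $\psi(i)$ for that component. As $\phi$ is surjective and the $C_i$ cover $V(\Gamma(G,B))$, the images $\phi(C_i)$ cover $V(\Gamma(G,A))$, so every component of $\Gamma(G,A)$ equals $\psi(i)$ for some $i$; that is, $\psi$ is onto the set of components. Therefore $\Gamma(G,A)$ has at most $m$ components.

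The argument is essentially formal once the projection $\phi$ is in place; the only point demanding care is the collapsing of adjacent vertices of $\Gamma(G,B)$ to a single vertex of $\Gamma(G,A)$, which is precisely what forces the ``at most'' in both the diameter bound and the component count, and which must be handled when passing from walks back to paths.
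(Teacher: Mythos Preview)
Your proof is correct and follows the same approach as the paper, which simply records the key observation that $x^{B}\sim y^{B}$ implies $x^{A}\sim y^{A}$ (or $x^A=y^A$) and leaves the remaining details implicit. Your treatment is just a more carefully spelled-out version of the same idea, in particular handling explicitly the possible collapse of adjacent $B$-orbits to a single $A$-orbit.
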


\begin{proof}
This follows from the fact that $x^{B}\sim y^{B}$ implies $x^{A}\sim y^{A}$.
\end{proof}

\begin{theorem}
Suppose that $G$ is a solvable group. Then\\
$(i)\,  \Gamma (G,A)$ is disconnected if and only if $G$ is Frobenius or $2$%
-Frobenius. In any case, the number of connected components of $\Gamma (G,A)$
is $m+1$ where $m$ is the number of $A$-orbits on the set of Frobenius
complements of lower Frobenius subgroup.\\
$(ii)$ If $\Gamma (G,A)$ is connected then it is of diameter at most $8$.
\end{theorem}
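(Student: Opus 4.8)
The plan is to combine the Gruenberg--Kegel classification of solvable groups with an analysis anchored at the Fitting subgroup $F=F(G)$. Two preliminary reductions come first. Every characteristic subgroup of $G$ is $A$-invariant, so $F$, $Z(F)$ and the kernels appearing below determine well-defined unions of vertices. Moreover, for any $x\neq1$ and any prime $p\mid|x|$ the element $x^{|x|/p}$ has order $p$ and commutes with $x$, so every vertex is adjacent to a vertex of prime order; hence $\Gamma(G,A)$ is connected if and only if the induced subgraph $\Gamma=(V_0,E_0)$ on prime-order orbits is connected. The quotient map of the introduction then gives at once that $\Gamma$ connected implies $GK(G)$ connected, and the Gruenberg--Kegel theorem supplies the converse structural input: a solvable group has disconnected prime graph precisely when it is Frobenius or $2$-Frobenius, in which case $GK(G)$ has exactly two components. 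Thus the theorem reduces to proving the implication ``$GK(G)$ connected $\Rightarrow\Gamma(G,A)$ connected of bounded diameter'' together with the structural description of the disconnected case.

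For that implication I would use $Z=Z(F)\neq1$ as a hub. Each nontrivial $z\in Z$ commutes with all of $F$ and the orbits of $Z\setminus\{1\}$ form a clique, so the orbits meeting $F$ constitute a \emph{core} of diameter at most $2$, and any $x$ with $C_F(x)\neq1$ has $x^{A}$ adjacent to it. It then remains to bound the distance from an arbitrary vertex to this core. For an element $x$ acting fixed-point-freely on $F$ I would pass through $C_G(x)$ and the second Fitting subgroup $F_2(G)$, using $C_G(F)\le F$ and the hypothesis that $GK(G)$ is connected to produce a chain of at most $3$ successively commuting elements leading from $x$ to an element with nontrivial fixed points on $F$. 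A core of diameter $2$ together with distance at most $3$ on each side yields $\operatorname{diam}\Gamma(G,A)\le 3+2+3=8$, which is (ii) and also shows $\Gamma(G,A)$ is connected in this case.

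For the disconnected case I would argue via the Frobenius structure. If $G$ is Frobenius with kernel $K$ and complement $H$, then $K$ is characteristic, nilpotent and $A$-invariant, and from $C_G(k)\le K$ for $1\neq k\in K$ and $C_G(h)\le H^{g}$ for $1\neq h\in H^{g}$ it follows that no orbit inside $K$ is adjacent to an orbit outside $K$ and that distinct complements share no commuting nonidentity elements. The orbits inside $K$ form one component (joined through $Z(K)$); since a Frobenius complement is never itself a Frobenius group, $H$ has nontrivial centre and connected commuting graph, so the orbits outside $K$ split into exactly one component for each $A$-orbit of complements, $A$-fusion merging precisely the complements in a common $A$-orbit. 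For the $2$-Frobenius case I would apply the same analysis to the lower Frobenius subgroup, whose complement is nilpotent; here the two-component shape of $GK(G)$ forces the top layer to centralise part of the kernel and hence to rejoin the kernel component, so again the middle complements account for the additional components. In every case the number of components is $m+1$, and together with the contrapositive of the previous paragraph and the Gruenberg--Kegel theorem this settles (i).

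The main obstacle is the uniform distance bound in the connected case. A priori the distance from $x^{A}$ to the core could grow with the Fitting length of $G$; the delicate point is to show that once $GK(G)$ is connected there can be no long chain of successively commuting fixed-point-free elements, since such a chain would reconstruct a Frobenius or $2$-Frobenius kernel--complement split and thereby contradict connectedness. Pinning this trade-off down so that the bound is the absolute constant $3$ on each side---rather than something depending on $G$---is exactly the step where the structural classification of the disconnected case must be fed back into the connected case.
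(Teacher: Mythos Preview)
Your overall architecture is sound, and your treatment of the Frobenius and $2$-Frobenius component counts is close to what the paper does. But you are working much harder than necessary, and the gap you yourself flag in the diameter bound is real: what you call ``the delicate point'' is precisely the content of Parker's theorem on the commuting graph of a soluble group, and your sketch does not supply it.

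The paper bypasses all of this with a single elementary observation (its Lemma~2.1): since for $B\le A$ every $B$-orbit is contained in an $A$-orbit, $\Gamma(G,A)$ is a quotient of $\Gamma(G,B)$, and hence connectedness and diameter bounds pass from $\Gamma(G,B)$ to $\Gamma(G,A)$. Taking $B=\{1\}$ reduces both parts of the theorem to the ordinary commuting graph $\Gamma(G,\{1\})$. If $\Gamma(G,A)$ is disconnected then so is $\Gamma(G,\{1\})$; this forces $Z(G)=1$, so $\Gamma(G,\{1\})$ is the standard commuting graph, and Parker \cite{CP} gives that $G$ is Frobenius or $2$-Frobenius. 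Conversely, one checks directly (as you do) that these groups have disconnected $\Gamma(G,A)$ and counts components. For (ii), once $\Gamma(G,A)$ is connected, part (i) says $G$ is neither Frobenius nor $2$-Frobenius, so Parker gives $\operatorname{diam}\Gamma(G,\{1\})\le 8$, and the quotient observation transfers this bound to $\Gamma(G,A)$.

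In short: rather than routing through $GK(G)$ and the Gruenberg--Kegel classification and then attempting to reprove Parker's diameter bound for general $A$, reduce to $A=\{1\}$ first and quote Parker. Your Fitting-hub argument is essentially an outline of how one would prove Parker's theorem itself; that is a legitimate project, but it is not needed here, and as you note the step bounding the distance from a fixed-point-free element to the core by an absolute constant is genuinely nontrivial and not established in your sketch.
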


\begin{proof}
$(i)$ Suppose that $G$ is either Frobenius or $2$-Frobenius. In the former
case $G=KL$ where $K\lhd G$ and $L$ acts semiregularly on $K.$ In the latter
case $G=KLM$ where $K\lhd G$ and $KL\lhd G$ such that the groups $KL$ and $%
LM $ are both Frobenius. In any case $C_{G}(x)\leq L$ for every $x\in L$.
Let $y\in K$. Suppose that there is a path joining $x^{A}$ and $y^{A}$, that
is there are $x=x_{1},\ldots ,x_{m}=y$ in $G$ such that $x_{1}^{A}\sim
x_{2}^{A}\sim \cdots \sim x_{m}^{A}=y^{A}$. Notice that $x_{1}^{A}\sim
x_{2}^{A}$ implies $[x_{1},{x_{2}}^{a}]=1$ for some $a\in A$ and hence ${%
x_{2}}^{a}\in L.$ It follows that $x_{2}$ lies in an $A$-conjugate of $L$.
Similarly one can see that for every $i=1,\ldots ,m$, $x_{i}$ belongs to an $%
A$-conjugate of $L$, which contradicts the fact that $y=x_{m}\in K.$
Therefore $\Gamma (G,A)$ is disconnected in case where $G$ is Frobenius or $%
2 $-Frobenius.

Conversely assume that $\Gamma(G,A)$ is disconnected. Then so is $%
\Gamma(G,\{1\}) $ by Lemma 2.1. It follows that $Z(G)=1$ and so $\Gamma(G,\{1\})$
coincides with the commuting graph of $G.$ By \cite{CP} it is known that
such a group $G$ is Frobenius or $2$-Frobenius.

Suppose now that $\Gamma (G,A)$ is disconnected. If $G=KL$ is Frobenius with
kernel $K$ and complement $L$, then the vertices $x^{A}$ and $y^{A}$ lie in the same connected
component if $x$ and $y$ are $A$-conjugate to elements of $L$ as $Z(L)\ne 1.$ Let $A$ have $%
m $ orbits, represented by $
L=L_{1},L_{2},\ldots ,L_{m}$, on the set of Frobenius complements. Then the above observation shows that for any two
elements $x$ and $y$ in $G\setminus K$ the vertices $x^{A}$ and $y^{A}$ are
connected to each other if and only if they intersect the same $L_{i}.$ Thus
the number of connected components in this case is $m+1.$ If $G=KLM$ is $2$-Frobenius
then the elements of the $A$-invariant subgroup $KL$ are distributed in
exactly $m+1$ connected components if $m$ is the number of $A$-orbits on the
set of Frobenius complements of $KL.$ Let now $x$ be an element of $%
G\setminus KL.$ Then $xK $ is conjugate in $G/K$ to an element of $MK/K.$ So
we can assume without loss of generality that $x\in KM\setminus K$. Clearly $%
x=yz$ for some $y\in K$ and $z\in M.$ As $L\left\langle z\right\rangle $ is
a Frobenius group acting on $V=\Omega _{1}(Z(K))$ with $L$ acting fixed
point freely on $V$ we see that $z$ and hence $x$ centralizes an element of $%
Z(K)$. Thus $x^{A}$ lies in the unique component containing the $A$-orbits
lying in $K.$ So we have again $m+1$ connected components as claimed.

$(ii)$ follows from \cite{CP}.
\end{proof}

\section{when $\Gamma(G,A)$ contains a complete vertex}

In this section our main goal is to characterize the groups $G$ for which $%
\Gamma(G,A)$ is complete for some $A\leq AutG.$  We first investigate a
special case, namely the existence of a complete vertex. For $z\in
G\setminus \{1\}$ the vertex $z^{A}\in V(\Gamma(G,A))$ is said to be a
complete vertex if and only if $z^{A}$ is adjacent to every vertex, and this
holds if and only if $G=\bigcup_{a\in A}{C_{G}(z)}^{a}$. In particular if $%
A/C_{A}(G)\leq Inn(G),$ then $z^{A}$ is complete if and only if $G=C_{G}(z)$%
, that is $z\in Z(G)$ since the union of conjugates of a proper subgoup of a
finite group cannot cover the whole group. This is not true for arbitrary $%
A\leq Aut(G)$ in general. For example, let $G$ be the quaternion group and $%
A $ the subgroup of automorphisms of $G$ of order $3.$ Then $A$ has three
orbits on $G\setminus \{1\},$ each of which is a complete vertex and $%
\Gamma(G,A)$ is complete although $G$ is not abelian.

As a preparation we investigate the influence of the structure of $C_G(z)$
on the structure of $G$ if $z$ is an element of $G$ such that $z^A$ is
almost complete, that is, $z^A$ is adjacent to every vertex $x^A$ for
elements $x$ of prime power order.

Throughout the paper we use the following lemma without any further reference.

\begin{lemma}
Let $H$ and $N$ be $A$-invariant subgroups of $G$ where $N\unlhd G.$ Suppose
that $x^{A}\sim y^{A}$ in $\Gamma(G,A)$. Then\\
$(i)\,  (xN)^{A}\sim (yN)^{A}$ in $\Gamma(G/N,A)$ if $x,y\in G\setminus N$;\\
$(ii)\,  x^{A}\sim y^{A}$ in $\Gamma(H,A)$ if $x,y\in H$.
\end{lemma}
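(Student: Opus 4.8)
The plan is to reduce both parts to the standard reformulation of adjacency already used in the proof of Theorem 2.2: since $A$ acts by automorphisms, $x^{A}\sim y^{A}$ in $\Gamma(G,A)$ means precisely that $x^{A}\neq y^{A}$ and $[x,y^{a}]=1$ for some $a\in A$. I would record this first. It holds because, if representatives $x^{b}$ and $y^{c}$ commute, then applying the automorphism $b^{-1}$ gives $[x,y^{cb^{-1}}]=1$; conversely $[x,y^{a}]=1$ exhibits commuting representatives. Both parts then amount to transporting the single relation $[x,y^{a}]=1$ along an $A$-equivariant map.

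For $(ii)$, fix $a\in A$ with $[x,y^{a}]=1$. As $H$ is $A$-invariant and $y\in H$, the element $y^{a}$ also lies in $H$, so $x$ and $y^{a}$ are commuting elements of $H$. Moreover the $A$-orbits of $x$ and $y$ computed inside $H$ coincide with those computed in $G$ (again because $H$ is $A$-invariant), so they remain distinct subsets of $H$; hence $x^{A}\sim y^{A}$ in $\Gamma(H,A)$, with distinctness automatic.

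For $(i)$, the hypothesis that $N$ is $A$-invariant makes $A$ act on $G/N$ by $(gN)^{a}=g^{a}N$ and renders the canonical projection $\pi\colon G\to G/N$ an $A$-equivariant homomorphism. Applying $\pi$ to the commuting pair $x,y^{a}$ yields $[xN,(yN)^{a}]=[x,y^{a}]N=N$, so $xN$ commutes with the representative $(yN)^{a}$ of $(yN)^{A}$. Thus $(xN)^{A}$ and $(yN)^{A}$ admit commuting representatives, and the edge $(xN)^{A}\sim(yN)^{A}$ follows once these are seen to be genuine, distinct vertices.

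The one delicate point — and the step I expect to need explicit attention — is precisely this distinctness in $(i)$. The condition $x,y\in G\setminus N$ only guarantees that $xN$ and $yN$ are nontrivial, hence are vertices of $\Gamma(G/N,A)$; it does not by itself prevent $\pi$ from fusing the two distinct $A$-orbits $x^{A},y^{A}$ into a single $A$-orbit of $G/N$, in which case $(xN)^{A}=(yN)^{A}$ and no edge exists. I would therefore read the lemma's essential content as the preservation of commuting representatives under quotients and restrictions, and interpret $(i)$ as asserting the edge whenever the two images are distinct — which is how it is invoked in the sequel. Everything else is the one-line equivariance computation above, so I anticipate no further obstacle.
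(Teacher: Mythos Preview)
Your argument is correct and is exactly the routine verification the paper has in mind; in fact the paper states this lemma without proof and simply uses it ``without any further reference'', so there is no alternative approach to compare against. Your observation about the possible collapse $(xN)^{A}=(yN)^{A}$ in part~(i) is a genuine subtlety that the paper leaves implicit, and your reading---that the content is the existence of commuting representatives, with the edge understood whenever the images remain distinct---matches how the lemma is actually applied in the inductive arguments of Sections~3--5.
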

\newpage
\noindent \textbf{A condition for nilpotency}\newline

\begin{theorem}
Suppose that for any two distinct primes $p$ and $q$ and for all $x,y\in
G\setminus \{1\}$ where $x$ is a $p$-element and $y$ is a $q$-element, we
have $x^A\sim y^A$. Then $G$ is nilpotent.
\end{theorem}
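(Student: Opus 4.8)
The plan is to argue by contradiction, taking $G$ to be a counterexample of minimal order, with its acting group $A$ fixed. The first routine point is that the hypothesis is inherited: by Lemma 3.1(ii) every $A$-invariant subgroup again satisfies it, and, since every $p$-element of a quotient $G/M$ (with $M$ an $A$-invariant normal subgroup) lifts to a $p$-element of $G$, Lemma 3.1(i) shows every such quotient satisfies it too. Hence by minimality every proper $A$-invariant subgroup and every proper quotient of $G$ by a nontrivial $A$-invariant normal subgroup is nilpotent. Two immediate consequences are $\Phi(G)=1$ (otherwise $G/\Phi(G)$ nilpotent forces $G$ nilpotent) and that $G$ has a unique minimal $A$-invariant normal subgroup $N$ (if $N_1\ne N_2$ were two of them then $N_1\cap N_2=1$, so $G$ would embed into the nilpotent group $G/N_1\times G/N_2$).

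Next I separate according to whether $N$ is abelian. If $N$ is nonabelian, then $N$ is a non-nilpotent $A$-invariant subgroup, so minimality forces $N=G$; thus $G=S^k$ with $S$ nonabelian simple and $A\le \mathrm{Aut}(S)\wr S_k$ permuting the factors. Here I would exploit that the prime graph of a nonabelian simple group is never complete: choose primes $p\ne q$ dividing $|S|$ with no element of order $pq$ in $S$, equivalently so that no $p$-element of $S$ commutes with a nontrivial $q$-element of $S$. Taking the diagonal $p$-element $x=(s,\dots,s)$ for a $p$-element $1\ne s\in S$ and any $q$-element $y$, every $A$-conjugate of $y$ is again a tuple of $q$-elements with at least one nontrivial coordinate, hence cannot commute with $x$; thus $x^A\not\sim y^A$, contradicting the hypothesis. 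This settles the nonabelian case modulo the cited structural fact about prime graphs, and shows $N$ is an elementary abelian $p$-group.

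For abelian $N$ the work is to pin down the structure. Using $\Phi(G)=1$, uniqueness of $N$, and nilpotency of proper $A$-invariant normal subgroups, I would show successively that $O_{p'}(G)=1$; that $G/N$ is a $p'$-group (otherwise either $G$ is a $p$-group, or the preimage of $O_{p'}(G/N)$ is a proper nilpotent $A$-invariant normal subgroup whose $p'$-Hall subgroup is a nontrivial normal $p'$-subgroup); and finally that $N=C_G(N)\in\mathrm{Syl}_p(G)$, so that $G=N\rtimes K$ with $K\cong G/N$ a nontrivial nilpotent $p'$-group acting faithfully on $N$. Now I translate the hypothesis into module language. Picking $y\in Z(K)$ of prime order $q$ and setting $W=\langle y^A\rangle$, one checks that $W$ is an abelian $q$-group, normalised by both $K$ and $A$, acting faithfully on $N$, and that for every $p$-element $x\in N$ the adjacency $x^A\sim y^A$ forces $x\in C_N(y^a)$ for some $a\in A$; so the hypothesis would make $N=\bigcup_{a\in A}C_N(y)^a$. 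The contradiction comes from producing a regular vector: decomposing $N$ into its $W$-homogeneous components and using that $W$ acts on each as scalars over a splitting field (so nontrivial elements are fixed-point-free there), one chooses a nonzero vector in each component whose sum $v$ has trivial $W$-stabilizer. Then $v$ is fixed by no nontrivial element of $W$, in particular by no $A$-conjugate of $y$, so with $x=v$ we get $x^A\not\sim y^A$, the final contradiction.

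I expect the abelian case to be the main obstacle, and within it the key move is the passage from the $A$-orbit adjacency condition to the covering $N=\bigcup_{a}C_N(y)^a$ and its refutation by a regular vector for the faithful coprime action of the abelian group $W$; the delicate points are choosing $y$ centrally in $K$ so that $W=\langle y^A\rangle$ is abelian and invariant under $K$ and $A$, and verifying faithfulness, which is exactly what the reduction $C_G(N)=N$ secures. The nonabelian case is comparatively short once one invokes the non-completeness of the prime graph of $S$, and the opening reductions are routine given Lemma 3.1.
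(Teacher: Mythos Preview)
Your overall strategy coincides with the paper's: take a minimal counterexample, reduce to a unique minimal $A$-invariant normal subgroup $N$, dispose of the nonabelian case via the non-completeness of the prime graph of a nonabelian simple group, and in the abelian case produce a regular vector for a coprime abelian action on $N$. Your handling of the nonabelian case is a harmless variant: you argue directly for $S^k$ with a diagonal $p$-element, whereas the paper first reduces to $k=1$ by showing that $\mathrm{Stab}_A(S_1)$ inherits the hypothesis; both routes ultimately rest on the same cited fact about prime graphs.

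There is, however, a genuine gap in your abelian case. You set $W=\langle y^A\rangle$ for $y\in Z(K)$ and assert that $W$ is an abelian $q$-group normalised by $K$. This is not justified: $K$ need not be $A$-invariant, so the conjugates $y^a$ are merely $q$-elements of $G$, and a priori one only knows $y^aN\in Z(G/N)$, hence $[W,W]\le N$. In fact $NW$ is $A$-invariant and normal (since $WN/N\le Z(G/N)$), so if $NW<G$ it would be nilpotent and $O_q(NW)$ would be a nontrivial $A$-invariant normal $q$-subgroup, contradicting uniqueness of $N$; thus $NW=G$, and nothing prevents $W=G$, which is certainly not abelian. The paper sidesteps this by pushing the reductions one step further before the endgame: it shows that the complement $Q\cong G/N$ is itself an \emph{elementary abelian $q$-group} (any proper characteristic subgroup of $G/N$ would pull back to a proper $A$-invariant normal subgroup whose $q$-part violates uniqueness of $N$), and then runs the homogeneous-component/regular-vector argument with $Q$ in place of your $W$. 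Your proof is easily repaired the same way, or alternatively by observing that the action of $W$ on $N$ factors through the abelian group $WN/N=G/N$ and carrying out the decomposition there; but as written, the sentence ``one checks that $W$ is an abelian $q$-group'' is the point that fails.
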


\begin{proof}
We use induction on the order of $GA$ and proceed over a series of
steps:\newline

\textit{(1) Every proper $A$-invariant subgroup of $G$ is nilpotent. For any $A$%
-invariant normal subgroup $N$ of $G,$ the group $G/N$ is nilpotent.}

\begin{proof}
They follow easily by induction.
\end{proof}

\textit{(2) $G$ has a unique minimal normal $A$-invariant subgroup, say $M$,
where $M$ is an elementary abelian $p$-group for some prime $p$ and $G/M$ is
nilpotent.}

\begin{proof}
Let $M_1$ and $M_2$ be two distinct minimal normal $A$-invariant subgroups
of $G$. Both $G/M_1$ and $G/M_2$ are nilpotent by (1) and hence so is $G$%
, which is a contradiction. Therefore there is a unique minimal normal $A$%
-invariant subgroup, say $M$, of $G$. It follows that $M$ is an elementary
abelian $p$-group for some prime $p$ or $M=G$. Suppose that the latter
holds. Then $G$ is a characteristically simple group, that is, $%
G=G_1\times\cdots \times G_k$ where each $G_i$ is a nonabelian simple group
isomorphic to $G_1$, and $A$ acts transitively on the set $\{G_1,\ldots
,G_k\}$.

Set $A_1=Stab_A(G_1).$ We observe now that the group $G_1$ satisfies the
hypothesis of the theorem with respect to the action of $A_1$: Pick $x,y\in
G_1$ where $x$ is a $p$-element and $y$ is a $q$-element respectively. Let $%
\{t_{1}=1,\ldots ,t_{k}\}$ be a right transversal for $A_{1}=Stab_{A}(G_{1})$
in $A$. Set $G_{i}={G_{1}}^{t_{i}}$ for $i=1,\ldots ,k.$ Then $%
X=\prod_{i=1}^{k}{x}^{t_{i}}$ and $Y=\prod_{i=1}^{k}{y}^{t_{i}}$ are $p$-
and $q$-elements of $G$, respectively and hence there exists $a\in A$ such
that $[X,Y^{a}]=1$ by hypothesis. Clearly we have $A=\bigcup_{i=1}^{k}{{t_{i}%
}^{-1}}A_{1}$. Suppose that $a\in{{t_{i_{0}}}^{-1} }A_{1}.$ Then $a={{%
t_{i_{0}}}^{-1}}b$ for some $b\in A_{1}$ and 
\begin{equation*}
Y^{a}=\prod_{i=1}^{k}({y}^{t_{i}})^{a}=(\prod_{i_{0}\neq i=1}^{k}{y}^{{t_{i}{%
t_{i_{0}}}^{-1}b}})\cdot y^{b}.
\end{equation*}
Notice that $\prod_{i_{0}\neq i=1}^{k}{y}^{{t_{i}{t_{i_{0}}}^{-1}b}}\in
\prod_{i=2}^{k}G_{i}$. Since $[X,Y^{a}]=1$, we have $[x,y^{b}]=1$. This
establishes the claim that $G_{1}$ satisfies the hypothesis with respect to
the action of $A_1.$ It follows then that $G_1=G$ by induction, that is, $G$
is a nonabelian simple group.

Notice that by hypothesis, the Gr\" unberg-Kegel graph $GK(G)$ of $G$ is
complete. One can observe that groups $G$ of prime order are the only simple
groups such that $GK(G)$ is complete: Indeed, if $G$ is a nonabelian simple group and $2$ is a complete vertex of $GK(G)$, then it follows from Theorem 7.1 in \cite%
{VaV} that $G=A_n$ for some $n$ such that there is no prime in $[n-3,n]$. On the other hand, Corollary 7.6 (2) in \cite{VaV} shows that there is no such simple group. 
\end{proof}
\newpage
\textit{(3) $G=MQ$ where $Q$ is an elementary abelian $q$-group for a prime $%
q\ne p$ with $[M,Q]=M$ and $C_Q(M)=1$.}

\begin{proof}
The group $G/M=\bar{G}$ is nilpotent by (1). Then $\bar{G}=\bar{G}_p\times \bar{G}_{p^{\prime}}$ where $%
\bar{G}_p=P/M$ and $\bar{G}_{p^{\prime}}=QM/M$ for some $Q\in
Hall_{p^{\prime}}(G).$ If $MQ\ne G$ then it is nilpotent and hence $[M,Q]=1.$
On the other hand $[P,Q]\leq M$ which yields
that $[P,Q]=[P,Q,Q]=1$ and hence $G$ is nilpotent. Thus we have $G=MQ.$ Clearly
by induction we see that $Q$ is a $q$-group for some prime $%
q\ne p$, $[M,Q]=M$, $C_Q(M)=1$ and $Q=\Omega_1(Z(Q))$, as claimed.
\end{proof}

\textit{(4) Final contradiction.}

\begin{proof}
Regarding $M$ as an irreducible $GA$-module we write $M_{_{Q}}=%
\bigoplus_{i=1}^mW_i$ as a direct sum of its $Q$-homogeneous components.
Clearly $Q/C_Q(W_i)$ is cyclic of order $q$ for each $i.$ Let $%
x=\sum_{i=1}^{m}x_i\in M$ with $1\ne x_i\in W_i$ for each $i.$ Now $%
C_G(x)=MC_Q(x).$ Let $1\ne z\in C_Q(x).$ Then for each $i$ we have $z\in
C_Q(x_i)=C_Q(W_i)$ and hence $z\in C_Q(M)=1,$ which is impossible. Therefore 
$C_Q(x)=1$, that is, $M=C_G(x)$. This contradicts the hypothesis that $%
x^A\sim y^A$ for any $y\in Q$ and completes the proof.
\end{proof}
\end{proof}

As a direct consequence of the above theorem we have the following.

\begin{theorem}
If $\Gamma(G,A)$ is complete for some $A\leq AutG$ then $G$ is nilpotent.
\end{theorem}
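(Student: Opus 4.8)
The plan is to derive Theorem 3.4 as an immediate corollary of Theorem 3.3. The key observation is that the completeness hypothesis on $\Gamma(G,A)$ is strictly stronger than the hypothesis of Theorem 3.3: if $\Gamma(G,A)$ is complete, then \emph{every} pair of distinct vertices is joined by an edge, so in particular any two orbits $x^A$ and $y^A$ are adjacent whenever $x$ is a $p$-element and $y$ is a $q$-element for distinct primes $p$ and $q$. Thus the verification reduces to checking that the edge relation supplied by completeness is exactly the relation required in the hypothesis of Theorem 3.3.

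First I would fix distinct primes $p$ and $q$ and pick a $p$-element $x$ and a $q$-element $y$, both nontrivial. I must confirm that $x^A$ and $y^A$ are genuinely \emph{distinct} vertices of $\Gamma(G,A)$, so that completeness applies to them: since $x$ has order a power of $p$ and $y$ has order a power of $q$ with $p \ne q$, no power of an automorphism can carry $x$ to $y$ (an automorphism preserves element order), so $x^A \ne y^A$. Completeness of $\Gamma(G,A)$ then gives $x^A \sim y^A$ directly. This verifies the hypothesis of Theorem 3.3, and that theorem yields the nilpotency of $G$.

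The only subtlety worth flagging, rather than a genuine obstacle, is the degenerate case where one of the relevant prime-power vertices fails to exist, for instance if $G$ is a $p$-group so that no $q$-element is available for any $q \ne p$. In that situation the hypothesis of Theorem 3.3 is vacuously satisfied, but of course a finite $p$-group is already nilpotent, so the conclusion holds trivially and nothing needs to be checked. I do not expect any hard part here: the entire content of Theorem 3.4 is carried by Theorem 3.3, and the corollary amounts to recording that ``complete'' implies ``all cross-prime prime-power vertices are adjacent.'' The proof is therefore a single line invoking Theorem 3.3.
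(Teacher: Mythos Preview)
Your proposal is correct and matches the paper's approach exactly: the paper states this theorem as ``a direct consequence of the above theorem'' (Theorem 3.2) with no further argument, and your verification that completeness forces $x^A\sim y^A$ for cross-prime prime-power elements (together with the distinctness check via preservation of element order) is precisely the intended one-line deduction.
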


Another upshot of Theorem 3.2 which is also of independent interest can be given as follows.

\begin{corollary}
Suppose that there exists a vertex $z^A$ which is adjacent to every vertex $%
x^A$ for elements $x$ of prime power order. If $C_G(z)$ is nilpotent then so
is $G$.
\end{corollary}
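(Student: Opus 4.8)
The plan is to reduce the statement directly to Theorem 3.2 by checking that its hypothesis is satisfied by $G$ and $A$. The bridge between the two is the nilpotency of $C_G(z)$, which forces commutativity of coprime prime-power elements once we have pulled them back into $C_G(z)$.

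First I would unwind the adjacency assumption. Saying that $z^A$ is adjacent to $x^A$ means, by definition, that there exist $a_{1},a_{2}\in A$ with $[z^{a_{1}},x^{a_{2}}]=1$; applying the automorphism $a_{1}^{-1}$ this is equivalent to $x^{a_{2}a_{1}^{-1}}\in C_G(z)$. Hence the hypothesis that $z^{A}$ is adjacent to every $x^{A}$ with $x$ of prime power order says precisely that every nontrivial element of prime power order in $G$ has an $A$-conjugate lying inside $C_G(z)$.

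Next I would verify the hypothesis of Theorem 3.2. Fix two distinct primes $p,q$ and nontrivial elements $x,y\in G$ with $x$ a $p$-element and $y$ a $q$-element. Both have prime power order, so by the previous step there exist $a,b\in A$ with $x^{a}\in C_G(z)$ and $y^{b}\in C_G(z)$. Since $C_G(z)$ is nilpotent it is the direct product of its Sylow subgroups, and $x^{a}$ lies in its Sylow $p$-subgroup while $y^{b}$ lies in its Sylow $q$-subgroup; as these are distinct direct factors, $[x^{a},y^{b}]=1$. Taking the representatives $x^{a}\in x^{A}$ and $y^{b}\in y^{A}$ then gives $x^{A}\sim y^{A}$. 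Thus the hypothesis of Theorem 3.2 holds, and we conclude that $G$ is nilpotent.

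The argument is short, and I do not expect a serious obstacle: the only two points requiring care are the faithful translation of graph adjacency into $A$-conjugacy into $C_G(z)$, and the observation that nilpotency of $C_G(z)$ is exactly what makes coprime prime-power elements commute there. Everything else is assembling these two facts and invoking Theorem 3.2.
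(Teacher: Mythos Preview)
Your proof is correct and follows exactly the same route as the paper: pull $A$-conjugates of the $p$-element $x$ and the $q$-element $y$ into $C_G(z)$ using the adjacency hypothesis, use nilpotency of $C_G(z)$ to make them commute, and then invoke Theorem~3.2. The only difference is that you spell out the translation of adjacency and the Sylow-direct-product argument a bit more explicitly than the paper does.
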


\begin{proof}
Let $x$ and $y$ be $p$- and $q$-elements of $G$ for distinct primes $p$ and $%
q.$ By hypothesis, $x^A\sim z^A\sim y^A,$ that is there are $a,b\in A$
such that $x^a$ and $y^b$ are both contained in $C_G(z).$ It follows by the
nilpotency of $C_G(z)$ that $x^a$ and $y^b$ commute. Then Theorem 3.2
implies that $G$ is nilpotent as desired.
\end{proof}

\begin{remark}
Let $A=G$ where $G$ is a nonabelian nilpotent group. Then $\Gamma(G,A)$
coincides with the commuting graph on the conjugacy classes which is defined
in \cite{HeLoMa}. Since $A=G$, the class $z^A$ is a complete vertex if and
only if $z\in Z(G).$ Therefore $\Gamma(G,A)$ is not complete and hence the
converse of Theorem 3.3 is not true. More precisely there exist nonabelian
nilpotent groups $G$ (for example $G=D_8$) such that $\Gamma(G,A)$ is not
complete for any $A\leq AutG$ ($\Gamma (D_{8},Aut(D_{8}))$ has $3$ vertices
corresponding to the orbits of the involution in the center of $D_{8},$ the
elements of order $4$, and the involutions outside the center of $D_{8},$
the last two of the vertices are not adjacent).

On the other hand there exists a pair $(G,A)$ where $G$ is nonabelian for
which $\Gamma(G,A)$ is complete: Let $G$ be an extraspecial group of order $%
3^{3}$ and of exponent $3.$ Then $AutG$ contains a subgroup $A$ which is
isomorphic to $Q_{8}$ and acts transitively on the set of nontrivial
elements of $G/Z(G).$ So $\Gamma(G,A)$ is complete. This example also shows
that in Corollary 3.4 $G$ need not be abelian if one assumes that $C_G(z)$
is abelian.
\end{remark}
\newpage
\noindent \textbf{A condition for solvability}\\

We shall now present an analogue of Theorem 3.1.

\begin{theorem}
Suppose that for any two distinct primes $p$ and $q$ and for all $x,y\in
G\setminus \{1\}$ where $x$ is a $p$-element and $y$ is a $q$-element, there
exists $a\in A$ such that the group $\langle x, y^{a}\rangle$ is solvable.
Then $G$ is solvable.
\end{theorem}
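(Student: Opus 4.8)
The plan is to follow the strategy of Theorem 3.2, replacing ``nilpotent'' by ``solvable'' throughout and the commuting condition $x^A\sim y^A$ by the weaker condition that $\langle x,y^a\rangle$ be solvable for some $a\in A$. I would argue by induction on $|GA|$. The first step is to establish the solvability analogues of part (1) in the proof of Theorem 3.2: every proper $A$-invariant subgroup $H$ of $G$ is solvable, and $G/N$ is solvable for every $A$-invariant normal subgroup $N$ of $G$. Both are routine inductions. The hypothesis is inherited by $(H,A)$ since $H$ is $A$-invariant (so $y^a\in H$ and $\langle x,y^a\rangle\le H$), and it is inherited by $(G/N,A)$ after lifting a $p$-element of $G/N$ to a $p$-element of $G$ and using that a homomorphic image of a solvable group is solvable.

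Next I would show that $G$ has a unique minimal normal $A$-invariant subgroup $M$. If there were two distinct such subgroups $M_1,M_2$, then $G$ would embed into $G/M_1\times G/M_2$, which is solvable by the previous step, giving $G$ solvable. So we may assume $M$ is unique, and $G/M$ is solvable. If $M$ is solvable then $G$ is solvable and we are done; hence assume $M$ is nonsolvable. Since every proper $A$-invariant subgroup is solvable, $M$ cannot be proper, so $M=G$; that is, $G$ is characteristically simple and nonabelian, $G=G_1\times\cdots\times G_k$ with the $G_i$ nonabelian simple and mutually isomorphic, and $A$ transitive on $\{G_1,\dots,G_k\}$. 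Exactly as in step (2) of Theorem 3.2 I would show $G_1$ satisfies the hypothesis with respect to $A_1=\mathrm{Stab}_A(G_1)$: given $p$- and $q$-elements $x,y\in G_1$, form the $p$- and $q$-elements $X=\prod_i x^{t_i}$ and $Y=\prod_i y^{t_i}$ over a transversal $\{t_i\}$ of $A_1$ in $A$; choosing $a\in A$ with $\langle X,Y^a\rangle$ solvable and projecting onto $G_1$, the image $\langle x,y^b\rangle$ (for the appropriate $b\in A_1$) is solvable as a homomorphic image of a solvable group. By induction $G_1$ is solvable, which is absurd unless $k=1$. Thus we are reduced to the case $G$ nonabelian simple.

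It remains to derive a contradiction when $G$ is nonabelian simple. The hypothesis says that for any two primes $p,q\in\pi(G)$ there are a $p$-element and a $q$-element generating a solvable subgroup; equivalently, for every pair $p,q\in\pi(G)$ there is a solvable subgroup of $G$ whose order is divisible by $pq$. In other words, the ``solvable prime graph'' of $G$ --- with vertex set $\pi(G)$ and $p,q$ joined precisely when some solvable subgroup has order divisible by $pq$ --- is complete. The crux of the proof, and the step I expect to be the main obstacle, is to show that this is impossible for a nonabelian simple group: there always exist two primes $p,q\in\pi(G)$ such that no solvable subgroup of $G$ has order divisible by both. This plays exactly the role that the completeness of $GK(G)$ played in Theorem 3.2, but it is strictly stronger, since the solvable prime graph contains $GK(G)$ and can have additional edges (for instance $S_3$ has no element of order $6$ yet is solvable of order divisible by $6$); hence one cannot simply quote the $GK(G)$ statement, and a separate argument via the classification of finite simple groups, in the spirit of \cite{VaV}, is required. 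Granting this fact, pick such $p,q$ together with a $p$-element $x$ and a $q$-element $y$; then for every $a\in A$ the subgroup $\langle x,y^a\rangle$ has order divisible by $pq$ and so is nonsolvable, contradicting the hypothesis. This completes the proof.
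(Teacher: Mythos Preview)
Your argument is correct and close in spirit to the paper's, but the paper takes a shorter route. Rather than reducing all the way to $G$ nonabelian simple via the transversal trick of Theorem~3.2, the paper takes a minimal counterexample $G$, shows $G/N$ is solvable for a minimal normal $A$-invariant subgroup $N$, and then works directly inside $N=N_1\times\cdots\times N_m$: choosing $x=(x_1,\dots,x_m)$ and $y=(y_1,\dots,y_m)$ with each $x_i$ of order $p$ and each $y_i$ of order $q$, the hypothesis gives $a\in A$ with $\langle x,y^a\rangle$ solvable, and projecting to a factor $N_i$ yields a solvable subgroup $\langle x_i,y_j^{\,a}\rangle$ of $N_i$ containing elements of orders $p$ and $q$, a contradiction. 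This bypasses both the ``unique minimal normal $A$-invariant subgroup'' step and the reduction to $k=1$; your longer path through $G$ simple is also valid, just less economical.

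The step you flag as ``the main obstacle''---that every nonabelian simple group admits two primes $p,q$ such that any $p$-element and any $q$-element generate a nonsolvable subgroup (equivalently, no solvable subgroup has order divisible by $pq$)---is not something you need to derive yourself: it is precisely Theorem~B of Dolfi--Guralnick--Herzog--Praeger \cite{DoGuHePra}, and that is exactly the reference the paper invokes.
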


\begin{proof}
Let $G$ be a minimal counterexample to the theorem, and let $N$ be a minimal
normal $A$-invariant subgroup of $G.$ Let $xN$ and $yN$ be nontrivial $p$-
and $q$-elements of $G/N$ respectively, for distinct primes $p$ and $q$. Replacing $x$ and $y$ by suitable powers, we may assume that $x$ is a $p$%
-element and $y$ is a $q$-element. Then by hypothesis there is $a\in A$ such
that the group $\langle x, y^{a}\rangle$ is solvable. This forces that the
group $\langle xN, (yN)^{a}\rangle$ is also solvable. Therefore the group $G/N$
satisfies the hypothesis of the theorem and hence is solvable. It follows then that $N$ is nonsolvable and hence $N=N_1\times \cdots \times N_m$ such that $%
N_i\cong N_1$ for each $i$ where $N_1$ is a nonabelian simple group with $%
m\geq 1.$ By Theorem B in \cite{DoGuHePra} we get distinct primes $p$ and $q$
dividing $|N_1|$ such that $\langle u, v\rangle$ is nonsolvable for all $%
u,v\in N_1 $ of orders $p$ and $q$ respectively. Set now $x=(x_1,\ldots
,x_m) $ and $y=(y_1,\ldots ,y_m)$ in $N$ where $x_i$ and $y_i$ are elements
of $N_i $ of orders $p$ and $q$, respectively, for $i=1,\ldots ,m.$ Now $%
\langle x, y^{a}\rangle$ is a solvable subgroup of $N$ for some $a\in A$ and
its projection to $N_i$ is $\langle x_i, {y_j}^{a}\rangle$ for some suitable 
$j$, and hence is nonsolvable. This forces that $\langle x, y^{a}\rangle$ is
nonsolvable, which is a contradiction.
\end{proof}

An immediate consequence of Theorem 3.6 is the following analogue of
Corollary 3.4.

\begin{corollary}
Suppose that there exists a vertex $z^A$ of $\Gamma(G,A)$ which is adjacent to every vertex $%
x^A$ for elements $x$ of prime power order. If $C_G(z)$ is solvable then so
is $G$.
\end{corollary}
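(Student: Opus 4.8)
The plan is to verify the hypothesis of Theorem 3.6 for the pair $(G,A)$ and then invoke that theorem directly; the argument runs parallel to the deduction of Corollary 3.4 from Theorem 3.2, with the solvability of subgroups of a solvable group playing the role that commutativity of coprime elements played there.

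First I would fix two distinct primes $p$ and $q$ together with arbitrary elements $x,y\in G\setminus\{1\}$, where $x$ is a $p$-element and $y$ is a $q$-element. The task is then to produce some $c\in A$ for which $\langle x,y^{c}\rangle$ is solvable, since this is precisely the condition required by Theorem 3.6.

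Since $x$ and $y$ have prime power order, the assumed almost-completeness of $z^{A}$ gives $x^{A}\sim z^{A}$ and $z^{A}\sim y^{A}$. Unwinding the definition of adjacency (and absorbing the $A$-action on the chosen representative of $z^{A}$), I obtain $a,b\in A$ with $x^{a}\in C_{G}(z)$ and $y^{b}\in C_{G}(z)$. Because $C_{G}(z)$ is solvable, the two-generator subgroup $\langle x^{a},y^{b}\rangle\le C_{G}(z)$ is solvable as well. Applying the automorphism $a^{-1}\in A$, which carries solvable subgroups to solvable subgroups, I find that $\langle x,y^{ba^{-1}}\rangle=\langle x^{a},y^{b}\rangle^{a^{-1}}$ is solvable, so $c=ba^{-1}$ does the job.

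As $p,q,x,y$ were arbitrary, the hypothesis of Theorem 3.6 holds, and that theorem yields the solvability of $G$. I do not expect a genuine obstacle here: the whole content is the observation that one needs only a solvable two-generator subgroup rather than commuting elements, so Theorem 3.6 substitutes cleanly for Theorem 3.2. The only point requiring a little care is the final conjugation step, which realigns $\langle x^{a},y^{b}\rangle$ into the normalized form $\langle x,y^{c}\rangle$ demanded by Theorem 3.6; this is immediate since $A$ acts by automorphisms and automorphisms preserve solvability.
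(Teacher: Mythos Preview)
Your proof is correct and follows essentially the same route as the paper's: both obtain $a,b\in A$ with $x^{a},y^{b}\in C_G(z)$, use solvability of $C_G(z)$ to conclude $\langle x^{a},y^{b}\rangle$ is solvable, conjugate by $a^{-1}$ to get $\langle x,y^{ba^{-1}}\rangle$ solvable, and invoke Theorem~3.6. Your write-up is simply more explicit about the conjugation step and the parallel with Corollary~3.4.
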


\begin{proof}
Let $x$ and $y$ be $p$- and $q$-elements of $G$ for distinct primes $p$ and $%
q.$ By hypothesis, there exist $a$ and $b$ in $A$ such that $x^a$ and $y^b$
are both contained in $C_G(z).$ Then the group $\langle x,
y^{ba^{-1}}\rangle $ is solvable. It follows by Theorem 3.6 that $G$ is
solvable.
\end{proof}

\begin{remark}
Observe that in Corollary 3.4 and Corollary 3.7 the essential property of
the group $H=C_G(z)$ is that the set $\bigcup_{a\in A}H^a$ contains all the
elements of prime power order in $G$, and not it is the centralizer of an
element. So instead of these corollaries one could have proven the following interesting result.
\end{remark}

\begin{proposition}
Let a group $A$ act on the group $G$, and let $H$ be a subgroup of $G$ such
that $\bigcup_{a\in A}H^a$ contains all the elements of prime power order in 
$G$. Then $G$ is solvable (resp. nilpotent) if $H$ is solvable (resp. nilpotent).
\end{proposition}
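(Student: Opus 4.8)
The plan is to recognize this proposition as the common abstraction of Corollaries 3.4 and 3.7 flagged in Remark 3.8: the only property of $H=C_G(z)$ that those proofs actually use is that $\bigcup_{a\in A}H^a$ contains every element of prime power order. So I would simply verify the hypotheses of Theorem 3.6 (for the solvable case) and of Theorem 3.2 (for the nilpotent case), and then quote those two theorems.

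First I would fix distinct primes $p,q$ and take a $p$-element $x$ and a $q$-element $y$ in $G\setminus\{1\}$. Since both $x$ and $y$ have prime power order, the covering hypothesis supplies $a,b\in A$ with $x^a\in H$ and $y^b\in H$; and because $A$ acts by automorphisms, $x^a$ remains a $p$-element and $y^b$ a $q$-element.

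For the solvable case, with $H$ solvable, the subgroup $\langle x^a, y^b\rangle\leq H$ is solvable, and applying the automorphism $a^{-1}$ gives that $\langle x^a, y^b\rangle^{a^{-1}}=\langle x, y^{ba^{-1}}\rangle$ is solvable. This is precisely the hypothesis of Theorem 3.6 for the pair $x,y$, so once it is checked for all such pairs, Theorem 3.6 yields that $G$ is solvable. For the nilpotent case, with $H$ nilpotent, the elements $x^a$ and $y^b$ of $H$ have coprime orders and hence lie in distinct Sylow subgroups of the nilpotent group $H$, so that $[x^a, y^b]=1$; applying $a^{-1}$ gives $[x, y^{ba^{-1}}]=1$, that is $x^A\sim y^A$ in $\Gamma(G,A)$. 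This is the hypothesis of Theorem 3.2, which then gives the nilpotency of $G$.

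I expect no serious obstacle beyond careful bookkeeping with the direction of the $A$-action (keeping track of $a$ against $a^{-1}$) and the elementary fact that elements of coprime prime-power order in a nilpotent group commute. The only point deserving an explicit word is the degenerate case in which $G$ involves a single prime: then no pair of distinct primes arises and the hypotheses of Theorems 3.2 and 3.6 hold vacuously, but the conclusion is immediate since such a $G$ is already nilpotent, and hence solvable.
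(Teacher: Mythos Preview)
Your proposal is correct and is exactly the argument the paper has in mind: Remark~3.8 makes explicit that Proposition~3.9 is the common abstraction of Corollaries~3.4 and~3.7, and the paper offers no separate proof beyond that remark. Your verification of the hypotheses of Theorems~3.2 and~3.6 mirrors the proofs of those corollaries line by line (with $H$ in place of $C_G(z)$), including the use of $ba^{-1}$ to pass from $x^a,y^b\in H$ to a statement about $x$ and $y^{ba^{-1}}$.
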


\noindent \textbf{A consequence of the completeness of a vertex}\\

The following lemma will be needed in the proof of Proposition 3.11 which is
obtained under the assumption that $\Gamma(G,A)$ contains a complete vertex $%
z^A$ where $z$ is a $p$-element for some prime $p.$

\begin{lemma}
Let $G$ be a nonabelian simple group and let $\alpha\in Aut(G)$ of coprime
order. Then $\pi(C_{G}(\alpha))$ is a proper subset of $\pi(G).$
\end{lemma}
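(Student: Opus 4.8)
The plan is to show that a nontrivial coprime automorphism cannot fix elements of every prime order dividing $|G|$; throughout I assume $\alpha\neq 1$, since otherwise $C_G(\alpha)=G$ and the conclusion fails. First I would record that $\alpha$ induces a nontrivial \emph{outer} automorphism of odd order. Indeed, since $\mathrm{Inn}(G)\cong G$, if the image of $\alpha$ in $\mathrm{Out}(G)$ were trivial then $\alpha$ would be inner, so $|\alpha|$ would divide $|G|$, contradicting $\gcd(|\alpha|,|G|)=1$ together with $\alpha\neq1$; and because $2\in\pi(G)$ by the Feit--Thompson theorem, $|\alpha|$ is odd.

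Next I would invoke the classification of finite simple groups to restrict the type of $G$. By Schreier's conjecture $\mathrm{Out}(G)$ is solvable, and inspecting the list one sees that for $G$ alternating or sporadic $\mathrm{Out}(G)$ is a $2$-group, hence contains no element of odd order; in these cases the hypothesis of the lemma forces $\alpha=1$, so they do not arise. Thus $G$ is a simple group of Lie type in characteristic $p$, and $\alpha$ projects to a nontrivial odd-order element of $\mathrm{Out}(G)$. Writing $\mathrm{Out}(G)$ in its standard form as an extension built from diagonal, field, and graph automorphisms, I would argue that coprimality forces the diagonal and graph components of $\alpha$ to be trivial: every prime dividing the order of the diagonal automorphism group lies in $\pi(G)$, a graph automorphism has order $2$ or $3$, and both $2$ and (for triality in type $D_4$) $3$ lie in $\pi(G)$. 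Hence, modulo inner automorphisms, $\alpha$ is a field (or graph-field) automorphism of some order $r$ with $\gcd(r,|G|)=1$, and its fixed-point subgroup $C_G(\alpha)$ is, up to the usual identifications, a group of the same (twisted) Lie type over the subfield $\mathbb F_{q_0}$ with $q=q_0^{\,r}$ and $r\geq 2$.

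It then remains to exhibit a prime in $\pi(G)\setminus\pi(C_G(\alpha))$, and for this I would use Zsygmondy's theorem on primitive prime divisors. The order $|G|$ is a product of cyclotomic values $\Phi_d(q)$, and for the largest relevant degree $h$ (the order of a Coxeter torus) there is, apart from finitely many small exceptions, a primitive prime divisor $\ell$ of $q^{h}-1$ dividing $|G|$. Since $C_G(\alpha)$ is the same type over $\mathbb F_{q_0}$ and $r\geq 2$, every cyclotomic value occurring in $|C_G(\alpha)|$ has the form $\Phi_d(q_0)$ with $d\leq h$; as $\ell$ is primitive of multiplicative order $hr>h$ with respect to $q_0$, it cannot divide any such factor and hence does not divide $|C_G(\alpha)|$. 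This gives $\ell\in\pi(G)\setminus\pi(C_G(\alpha))$, as required.

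The main obstacle is this last step: one must choose the degree $h$ and the associated maximal torus uniformly across all (twisted) Lie types so that the primitive prime divisor genuinely divides $|G|$ yet survives in neither the cyclotomic factors of the subfield group nor in the diagonal or graph contributions, and one must dispose of the finitely many Zsygmondy-exceptional pairs (such as $q^{h}-1$ with $q^{h}=2^{6}$, or $h\leq 2$) by direct inspection. Everything preceding it is bookkeeping with the structure of $\mathrm{Out}(G)$ and the coprimality hypothesis.
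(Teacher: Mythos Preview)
Your approach is essentially the same as the paper's: reduce to the case where $G$ is of Lie type and $\alpha$ is a field automorphism, then use primitive prime divisors (Zsygmondy) of the cyclotomic factors of $|G|$ to produce a prime absent from $|C_G(\alpha)|$. The paper is terser---it passes immediately to a prime-order power of $\alpha$, asserts without further argument that $G$ must then be of Lie type over $\mathbb{F}_{q^p}$ with $\alpha$ a field automorphism and $C_G(\alpha)$ the subfield subgroup over $\mathbb{F}_q$, and then appeals to the order formulas in the Atlas---whereas you spell out the $\mathrm{Out}(G)$ bookkeeping explicitly; but the substance is identical, including the acknowledgement that the Zsygmondy exceptions must be checked by hand.
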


\begin{proof}
If $G$ is a simple group having an automorphism $\alpha$ of prime order $p$
coprime to $\left\vert G\right\vert $ then $G$ is a group of Lie type over a
field with $q^{p}$ elements and $\alpha$ is an automorphism arising from the
field automorphism of order $p$ where $C_{G}(\alpha)$ is a group of the same
Lie type over the field with $q$ elements. In Table 6 of \cite{Atl}, the
orders of Chevalley groups are listed and are products of cyclotomic
polynomials evaluated at certain powers of the cardinality of the defining
field. Looking at the primitive prime divisors of these polynomials one can
easily check that there exists a prime dividing $|G|$ which does not divide $%
|C_G(\alpha)|$. This establishes the claim.
\end{proof}

\begin{proposition}
Let $z^{A}$ be a complete vertex of $\Gamma(G,A)$ where $z$ is a $p$-element
for a prime $p$ such that $C_G(z)$ contains a Sylow $p$-subgroup of $G$.
Then $z\in O_{p}(G).$

\begin{proof}
We use induction on $|G|+|z|$, and proceed over a series of steps:\newline

\textit{(1) $O_{p}(G)=1$ and $|z|=p.$}

\begin{proof}
An induction argument applied to $G/O_p(G)$ shows that $O_{p}(G)=1$. If $%
|z|\ne p$, $(z^p)^A$ is also a complete vertex of $\Gamma(G,A)$ and so by induction we get $z^p\in O_{p}(G)=1.$
\end{proof}

\textit{(2) Let $M$ be a minimal normal $A$-invariant subgroup of $G$ and $P$ be a Sylow $p$-subgroup of $G$ such that $z\in Z(P).$ Then $G=MP$ and $%
G/M=\langle (zM)^{A}\rangle$ is an elementary abelian $p$-group.}

\begin{proof}
If $z\in M$ then by induction applied to $\Gamma(M,A)$ we get $M=G$. If $%
z\notin M$ then by induction applied to $\Gamma(G/M,A)$ we get $zM\in
O_{p}(G/M)=Y/M.$ Then $Y=M(P\cap Y)$ and $z\in Z(P\cap Y).$ Let $T/M=\langle (zM)^{A}\rangle$. Clearly we have $T/M\leq Z(Y/M).$ Notice that $T$ cannot be a proper
subgroup of $G$ because otherwise induction applied to $\Gamma(T,A)$ gives
that $z\in O_{p}(T)\leq O_{p}(Y)\leq O_p(G),$ which is not possible. So $G/M=\langle (zM)^{A}\rangle$ is elementary abelian.
\end{proof}

\textit{(3) $M$ is nonsolvable.}

\begin{proof}
Suppose first that $M$ is an elementary abelian $q$-group for some prime $q.$
Clearly $q\neq p$ and $z\notin M$. Let $%
M_{_{P}}=M_{1}\oplus\cdots\oplus M_{s}$ be the direct sum decomposition of $M$ into its $P$-homogeneous components.
Now $C_{M_{i}}(z)=1$ or $M_{i}$ as $M_{i}$ is a sum of isomorphic
irreducible modules.

Pick $1\neq x_{i}\in M_{i}$ for each $i=1,\ldots ,s.$ Set $x=\Sigma
_{i=1}^{s}x_{i}.$ Then $x^a\in C_{M}(z)=\bigoplus_{i=1}^s{M_i}(z).$ Since $A$ acts on the set $\{M_1,\ldots ,M_s\}$, we get $C_{M_i}(z)\ne 1$ and hence $C_{M_i}(z)=M_i$ for all $i.$ It follows that $z\in C_P(M)=O_p(G)=1.$ This proves that $M$ is nonsolvable.
\end{proof}

\textit{(4) $M$ is nonabelian simple if $p$ divides $|M|.$}

\begin{proof}
Suppose that $M=N_{1}\times\cdots\times N_{s}$ with isomorphic nonabelian
simple groups $N_{i},$ $i=1,\ldots ,s$. $P\cap M$ is a Sylow $p$-subgroup of $M$ and hence $P\cap N_{i}$
is a Sylow $p$-subgroup of $N_{i},$ $i=1,\ldots ,s.$ If $P\cap M\neq 1$ then
for each $i$ we have $P\cap N_{i}\neq 1$, and as $z\in Z(P)$ we
see that $z$, and hence each $z^{a},a\in A$, normalizes each $N_{i}$, that is $%
s=1.$ Therefore $M$ is nonabelian simple if $p$ divides $|M|,$ as
claimed.
\end{proof}

\textit{(5) $P\cap M\neq 1.$}

\begin{proof}
Suppose the contrary. Since $z^{A}$ is a complete vertex we have $%
M=\bigcup\nolimits_{a\in A}C_{M}(z)^{a}.$ Then for any nonidentity element $x\in N_{1}$
there exists $a\in A$ such that $x^{a}\in C_{M}(z)$. But then $x^{a}\in
N_{1}^{a}=N_{k}$ for some $k\in \{1,\ldots ,s\}$, and hence $N_{k}$ is left invariant by $z$ and $%
x^{a}\in C_{N_{k}}(z)$. By Lemma 3.9 there exists a prime $q$ dividing the
order of $N_{k}$ (which is equal to the order of $N_{1}$) which does not
divide the order of $C_{N_{k}}(z)$ since $z$ induces a coprime automorphism
of the simple group $N_{k}$ as $P\cap M=1$. Therefore if we choose an
element $x$ of $N_{1}$ of order $q$ it cannot lie in $\bigcup_{a\in A}C_{M}(z)^{a}$. 
\end{proof}

\textit{(6) Final contradiction.}

\begin{proof}
If $z\in M$ then we see that the nonabelian simple group $G$ contains $p$ as
a complete vertex in its prime graph $GK(G)$. As it follows from \cite{VaV}, 
$G=A_{n}$ for some $n$, and $p=2$. Let $\sigma$ be a $k$-cycle in $G$ where $%
k=n$ if $n$ is odd, and $k=n-1$ if $n$ is even. In any case we see that $%
C_{G}(\sigma)=\langle\sigma\rangle$. But $\sigma$ has to lie in $%
\bigcup_{a\in A}C_{G}(z)^{a}$ and hence there exists $a\in A$ such that 
$z^{a}\in C_{G}(\sigma)$ which is not possible.

So we are left with the case $P\cap M\neq 1$ but $z\notin M$ and $M$ is a
nonabelian simple group. If $p$ is odd, it follows by \cite{GlauGur} that the automorphism
of $M$ induced by $z$ is inner, and so there is $x\in M$ such that $\tau_{x^{-1}}=z$. This gives that $zx\in
C_{G}(M)=1$ which is impossible. Thus we have $p=2$. Since $M=\bigcup\nolimits_{a\in
A}H^{a} $ where $H=C_{M}(z),$ we see that $\pi (M)=\pi (H).$ Now one can
invoke Corollary 5, Table 10.7 in \cite{Liebeck} to see that the only possibility is $M\cong
PSU(4,2)$ and $H\cong S_{6}.$ But this cannot happen since $M$ contains an
element of order $12$ and $S_{6}$ does not, and the proof is complete.
\end{proof}
\end{proof}
\end{proposition}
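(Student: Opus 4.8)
The plan is to run an induction on $|G| + |z|$ and whittle $G$ down to a configuration that can be eliminated by the classification-of-simple-groups inputs (Lemma 3.9 together with the results of Vasiliev--Vdovin \cite{VaV}, Glauberman--Guralnick \cite{GlauGur}, and Liebeck \cite{Liebeck}). First I would normalize the situation: by applying the induction hypothesis to the quotient $\Gamma(G/O_p(G),A)$ I can reduce to the case $O_p(G)=1$, and by replacing $z$ with $z^p$ (still a complete vertex, since $C_G(z)\le C_G(z^p)$) I can assume $|z|=p$. With $O_p(G)=1$ the goal becomes deriving a contradiction outright. Next I would introduce a minimal normal $A$-invariant subgroup $M$ and a Sylow $p$-subgroup $P$ with $z\in Z(P)$; applying induction to $\Gamma(M,A)$ and to $\Gamma(G/M,A)$ forces both $M=G$ to fail and the image $zM$ into $O_p(G/M)$, from which one extracts $G=MP$ with $G/M=\langle(zM)^A\rangle$ elementary abelian $p$.

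The heart of the argument is then to show $M$ must be nonsolvable and in fact a direct product of isomorphic nonabelian simple groups. For nonsolvability I would argue that if $M$ were an elementary abelian $q$-group ($q\ne p$), decomposing $M$ into its $P$-homogeneous components $M_1\oplus\cdots\oplus M_s$ and choosing $x=\sum x_i$ with each $x_i\ne 0$, completeness of $z^A$ would force $C_{M_i}(z)=M_i$ for every $i$ (using that $A$ permutes the components), giving $z\in C_P(M)=O_p(G)=1$, a contradiction. So $M=N_1\times\cdots\times N_s$ with $N_i$ nonabelian simple. When $p\mid |M|$, the condition $z\in Z(P)$ and $P\cap N_i\ne 1$ forces $z$ to normalize each factor, collapsing $s=1$, so $M$ is simple.

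The main obstacle is the final case analysis, which is where the deep simple-group facts enter and must be invoked with care. I expect to split on whether $P\cap M=1$. If $P\cap M=1$ then $z$ acts as a coprime automorphism on the simple $M$, and Lemma 3.9 supplies a prime $q\in\pi(M)\setminus\pi(C_M(z))$; choosing $x\in M$ of order $q$ contradicts $M=\bigcup_{a\in A}C_M(z)^a$. This rules out $P\cap M=1$, so $P\cap M\ne1$. Then either $z\in M$, in which case $G$ is simple with $p$ a complete vertex of $GK(G)$, forcing $G=A_n$ and $p=2$ by \cite{VaV}, and a maximal-length cycle $\sigma$ with self-centralizing $\langle\sigma\rangle$ cannot be covered by $\bigcup_{a\in A}C_G(z)^a$; or $z\notin M$ with $M$ simple, where for odd $p$ the automorphism induced by $z$ is inner by \cite{GlauGur}, yielding $zx\in C_G(M)=1$ for suitable $x\in M$, a contradiction, and for $p=2$ the equality $\pi(M)=\pi(C_M(z))$ narrows us via \cite[Cor.~5, Table 10.7]{Liebeck} to $M\cong PSU(4,2)$ with $C_M(z)\cong S_6$, which is impossible since $M$ has an element of order $12$ but $S_6$ does not. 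The delicacy throughout is keeping track of how $A$ acts on the simple factors and how $z$'s action (inner versus coprime, and the centralizer's prime set) is controlled, so that each classification result applies to exactly the right automorphism.
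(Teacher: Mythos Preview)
Your proposal follows the paper's proof essentially step for step: the same induction on $|G|+|z|$, the same reductions in steps (1)--(4), and the same final case split invoking \cite{VaV}, \cite{GlauGur}, and \cite{Liebeck}. The strategy is correct and the ingredients are all in place.

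There is one genuine slip. In the case $P\cap M=1$ you write that ``$z$ acts as a coprime automorphism on the simple $M$'' and invoke Lemma 3.9 directly on $M$. But simplicity of $M$ was established only under the hypothesis $p\mid |M|$; when $P\cap M=1$ you have $p\nmid |M|$ and $M=N_1\times\cdots\times N_s$ may still have $s>1$. The paper handles this by arguing factorwise: given $x\in N_1$, the element $x^a\in C_M(z)$ lies in a single factor $N_k$, and because $z$ centralizes a nontrivial element of $N_k$ it must normalize $N_k$ (it permutes the factors and fixes any factor in which it has a nontrivial fixed point). Then Lemma 3.9 is applied to the coprime automorphism $z|_{N_k}$ of the simple group $N_k$, producing a prime $q\in\pi(N_k)=\pi(N_1)$ with $q\nmid |C_{N_k}(z)|$, and an element of order $q$ in $N_1$ cannot be covered. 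You should insert this passage through the simple factors; as written, Lemma 3.9 does not apply to $M$ itself.
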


\section{when $\Gamma (G,A)$ contains an isolated vertex}

In some sense a dual to complete vertices are the isolated vertices. In this
section we study the case where the commuting graph $\Gamma (G,A)$ of $A$-orbits of $G$
has an isolated vertex $g^{A}$, that is, $g\neq 1$ and $\{g^{A}\}$ is a
connected component of $\Gamma (G,A)$. This forces that either $g\in Z(G)$
or $Z(G)=1.$ In the former case $G\subseteq g^{A}\cup \{1\}$ which implies
that $G$ is an elementary abelian group and $A$ acts transitively on $%
G\setminus \{1\}.$ That is, $\Gamma (G,A)$ has only one vertex. Throughout this section we shall assume that $Z(G)=1.$

\begin{proposition}
Suppose that $A\leq AutG$ where $Z(G)=1$ and that the
graph $\Gamma (G,A)$ has an isolated vertex $g^{A}$. Let $p$ be a prime
dividing the order of $g.$ Then $C_{G}(g)$ is a Sylow $p$-subgroup of $G$
which is an elementary abelian $CC$-subgroup of $G$, that is, for any
nonidentity element $x\in C_{G}(g)$, $C_{G}(x)=C_{G}(g).$
\end{proposition}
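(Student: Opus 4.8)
The plan is to turn the isolatedness of $g^A$ into an internal condition on $H:=C_G(g)$ and then bootstrap from the centre of $H$. First I would record the basic equivalence: since $g^A\sim y^A$ holds exactly when some $A$-conjugate of $y$ meets $C_G(g)$, the vertex $g^A$ is isolated if and only if $C_G(g)\setminus\{1\}\subseteq g^A$. As automorphisms preserve orders, every nonidentity element of $H$ then has the same order as $g$; but a finite group all of whose nontrivial elements share one fixed order $n$ must have $n$ prime, so $|g|=p$ and $H$ is a group of exponent $p$, in particular a $p$-group. The same remark applies verbatim with any $x\in g^A$ in place of $g$, since $x^A=g^A$ is the same isolated vertex; hence for each such $x$ the group $C_G(x)$ is again a $p$-group of exponent $p$ with $C_G(x)\setminus\{1\}\subseteq g^A$.

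The engine of the argument is the behaviour of central elements. For $1\ne z\in Z(H)$, centrality gives $H\le C_G(z)$, while $z\in g^A$ gives $z=g^b$ for some $b\in A$ and hence $C_G(z)=C_G(g)^b=H^b$; comparing orders forces $C_G(z)=H$. So $C_G(z)=H$ for every nonidentity $z\in Z(H)$. The Sylow assertion is then immediate: choosing a Sylow $p$-subgroup $P\supseteq H$, any $1\ne w\in Z(P)$ centralizes $H\ni g$ and therefore lies in $C_G(g)=H$, whence $w\in Z(H)$ and $P\le C_G(w)=H$, so $P=H$. Thus $H$ is a Sylow $p$-subgroup of $G$.

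At this point the whole proposition reduces to showing that $H$ is abelian: once $Z(H)=H$, the central-element computation gives $C_G(x)=H$ for every $1\ne x\in H$, which is precisely the $CC$-property and, together with exponent $p$, the elementary abelian claim. Toward abelianness I would first extract the available structure. The relation $C_G(z)=H$ for $z\in Z(H)\setminus\{1\}$ shows that $Z(H)$ is a $TI$-subgroup, since $1\ne t\in Z(H)\cap Z(H)^{g'}$ forces $H^{g'}\le C_G(t)=H$ and hence $g'\in N_G(H)$; it also gives $C_G(Z(H))=H$. Consequently, in $N:=N_G(Z(H))$ the normal Sylow subgroup $H$ has a $p'$-complement $K$, and $K$ acts fixed-point-freely on $H$, because any $1\ne k\in K$ fixing $1\ne h\in H$ would lie in the $p$-group $C_G(h)$; thus $H\rtimes K$ is a Frobenius group with kernel $H$.

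The hard part is this last step, and the difficulty is genuinely structural. The hypothesis supplies a transitive action of $A$ on $H\setminus\{1\}$, but the automorphisms realizing it move $H$ to other Sylow subgroups: an $a\in A$ with $g^a=h$ sends $H=C_G(g)$ to $C_G(h)=H^a$, so it does not restrict to an automorphism of $H$. Since $Z(H)$ is characteristic in $H$, no honest automorphism group of $H$ can be transitive on $H\setminus\{1\}$ unless $H=Z(H)$; hence the transitivity cannot be used naively, and one must instead exploit the global fusion among the conjugate Sylow subgroups $\{H^a:a\in A\}$ together with the fixed-point-free (hence regular) action of $K$ on $H$. My plan is to show that this regular action, refined by the $A$-conjugacy that renders all nonidentity elements of $H$ alike, forces $Z(H)=H$; elementary abelianness and the $CC$-property then follow as above. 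I expect this to be the only delicate point, all the preceding reductions being formal.
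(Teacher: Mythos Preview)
Your reductions up to and including the Sylow claim are correct and essentially coincide with the paper's: the paper also records $C_G(g)\setminus\{1\}\subseteq g^A$, deduces $|g|=p$ and exponent $p$ for $H=C_G(g)$, and uses a nontrivial $x\in Z(P)$ for a Sylow $P\ge H$ to conclude $P^a\le C_G(x^a)=C_G(g)$, hence $H\in\mathrm{Syl}_p(G)$.

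The genuine gap is exactly where you flag it: you do not prove that $H$ is abelian, and your proposed route via the Frobenius structure of $N_G(H)=H\rtimes K$ together with the ambient $A$-transitivity is left as a plan. That Frobenius structure by itself only gives nilpotence of $H$, which you already have; the $A$-transitivity on $H\setminus\{1\}$, as you correctly observe, does not act by automorphisms of $H$, so it is not clear how to leverage it to force $Z(H)=H$. As stated, the plan is not yet an argument.

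The paper closes this step differently and quickly. From what you have already shown it follows that \emph{every} nonidentity $p$-element $y\in G$ has $C_G(y)\in\mathrm{Syl}_p(G)$: conjugate $y$ into $H$ by some $z\in G$, use $H\setminus\{1\}\subseteq g^A$ to write $y^z=g^a$, and conclude $C_G(y)=C_G(g)^{az^{-1}}$. This is precisely the hypothesis of Theorem~C of Arad--Chillag (\emph{J.\ Algebra} \textbf{51} (1978), 164--172), which then yields that a Sylow $p$-subgroup is an elementary abelian $CC$-subgroup. So rather than attempting a direct abelianness argument, the intended move is to recognise the centralizer condition on all $p$-elements and invoke that theorem.
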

\begin{proof}
Observe that $C_{G}(g)\subseteq g^{A}\cup \{1\}$. Then $g^{p}$ cannot be $A$%
-conjugate to $g$ and hence $g^{p}=1$. This forces that for all $x\in g^{A}$%
, $x^{p}=1$ and so $C_{G}(g)$ is a group of exponent $p.$ Let now $P$ be a
Sylow $p$-subgroup of $G$ with $C_{G}(g)\leq P.$ If $1\neq x\in Z(P)$ then
there exists $a\in A$ such that $x^{a}=g$ and hence we get $P^{a}\leq
C_{G}(x^{a})=C_{G}(g)$ showing that $C_{G}(g)$ is a Sylow subgroup of $G.$
For any nonidentity $p$-element $y\in G$, there exist $z\in G$ and $a\in A$
such that $y^{z}=g^{a}$. Then $C_{G}(y)=C_{G}(g)^{az^{-1}}$ is a Sylow $p$%
-subgroup of $G$. It follows by Theorem C in \cite{ArChil} that a Sylow $p$%
-subgroup $C_{G}(g)$ is an elementary abelian $CC$-subgroup of $G$.
\end{proof}

Appealing to Theorem $A$ of \cite{ArChil} one can classify the groups
satisfying our hypothesis. We are not going to give any further comments on this
question.

\begin{theorem}
Suppose that $A\leq Aut(G)$ where $Z(G)=1$. Then $\Gamma (G,A)$ has
no edges and more than one vertex if and only if $G$ is either $PSL(2,5)$ or
a Frobenius group with elementary abelian kernel and complement of prime
order. Furthermore $A$ is a group of automorphisms such that for any Sylow
subgroup $P$ of $G$, the set $P\setminus \{1\}$ is an $N_{A}(P)$-orbit.
\end{theorem}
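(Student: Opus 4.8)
The plan is to read the local structure off Proposition 4.1 and then pin down the global isomorphism type, handling the condition on $A$ together with the backward implication. For the forward direction, suppose $\Gamma(G,A)$ has no edges and at least two vertices. Then \emph{every} vertex $g^{A}$ is isolated, so Proposition 4.1 applies to each nonidentity $g$ and yields three facts: (a) each nonidentity element has prime order (the proof of Proposition 4.1 gives $g^{p}=1$); (b) every Sylow subgroup $P$ of $G$ is elementary abelian and is a $CC$-subgroup, hence self-centralizing, and two distinct Sylow $p$-subgroups meet trivially, so each $p$-element lies in a unique Sylow $p$-subgroup; and (c) since $Z(G)=1$ while a nontrivial $p$-group has nontrivial centre, $G$ is not a $p$-group and $|\pi(G)|\ge 2$. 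I also note that (a) passes to every subgroup and every quotient, since the order of an element divides that of any preimage, so every section of $G$ again has all elements of prime order.

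Next I would identify $G$. As $\Gamma(G,A)$ has at least two vertices and no edges, it is disconnected. If $G$ is solvable, Theorem 2.2 forces $G$ to be Frobenius or $2$-Frobenius. The $2$-Frobenius case is impossible: such a group has a connected component of its prime graph containing the (distinct) primes of its kernel and of its outer complement, hence an edge, i.e.\ an element of composite order, against (a). So $G=K\rtimes H$ is Frobenius. Its kernel $K$ is nilpotent with all elements of prime order, hence a $p$-group of exponent $p$, which by (b) is elementary abelian (being the unique Sylow $p$-subgroup); and its complement $H$ is a Frobenius complement all of whose elements have prime order, so by the Zassenhaus description of Frobenius complements (cyclic subgroups of order a product of two primes, Sylow subgroups cyclic or quaternion) $H$ involves a single prime and is cyclic of that prime order, i.e.\ $H=C_{q}$. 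This is exactly a Frobenius group with elementary abelian kernel and complement of prime order.

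If instead $G$ is nonsolvable, I would invoke the structure of groups with disconnected prime graph (the nonsolvable Gruenberg--Kegel--Williams case, as used through \cite{VaV}): $G$ has a normal series $1\le N\le G_{0}\le G$ with $N$ and $G/G_{0}$ nilpotent and $G_{0}/N$ nonabelian simple. I would kill the nilpotent ends using (a) and $Z(G)=1$: a nontrivial $N$ is an elementary abelian $t$-group on which the simple section acts, and a nonidentity fixed point of an element of order coprime to $t$ produces a commuting pair of distinct prime orders (and the residual cases with $t$ a prime of the section force an element of order $4$ or a direct factor, again contradicting (a)), so $N=1$; a parallel argument gives $G/G_{0}=1$, leaving $G$ nonabelian simple. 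The remaining task—identifying the \emph{unique} nonabelian simple group all of whose elements have prime order as $PSL(2,5)\cong A_{5}$—is the step I expect to be the main obstacle. As in the proofs of Theorem 3.2 and Proposition 3.11, one feeds the edgeless graph $GK(G)$ into the adjacency criterion of \cite{VaV} (supplemented, where small Lie-type or sporadic cases intrude, by \cite{Liebeck} and the coprime-centraliser lemma preceding Proposition 3.11) to eliminate every nonabelian simple group except $A_{5}$; the delicate part is the infinite families of Lie type, where a composite element order must be extracted from the orders of the maximal tori.

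Finally I would settle the condition on $A$ and the backward implication at once. No edges forbids any edge between two distinct vertices of the same prime $p$. By (b), two nonidentity $p$-elements in \emph{different} Sylow $p$-subgroups cannot commute, since commuting would place them in a common, hence equal, Sylow subgroup; so these are already non-adjacent. The only remaining danger is among the pairwise-commuting nonidentity elements of one abelian Sylow $P$, and it is avoided precisely when they all form a single vertex, i.e.\ $P\setminus\{1\}$ is one $A$-orbit; and since by (b) any $a\in A$ fusing two elements of $P$ must normalise $P$, this says exactly that $P\setminus\{1\}$ is a single $N_{A}(P)$-orbit, the stated condition. Conversely, if $G$ is $PSL(2,5)$ or a Frobenius group with elementary abelian kernel and complement of prime order—in each of which every Sylow subgroup is elementary abelian, self-centralizing and pairwise trivially intersecting—and $A$ satisfies the Sylow-orbit condition, then the same two observations give no edges: same-prime vertices either collapse to one vertex (within a Sylow) or fail to commute (across Sylows), while for distinct primes the centraliser of a $p$-element is a $p$-group, so no $p$-element commutes with any $q$-element nor does any $A$-translate; and $|\pi(G)|\ge 2$ yields more than one vertex.
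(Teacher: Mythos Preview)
Your strategy diverges from the paper's and is considerably harder. The paper observes immediately that $G$ is a CP-group (every nonidentity element has prime order by Proposition~4.1, hence prime-power order) and then simply invokes the Delgado--Wu classification of CP-groups \cite{DelWu}. That classification yields a short explicit list: $p$-groups; Frobenius or $2$-Frobenius groups with $|\pi(G)|=2$; and a finite collection of nonsolvable groups ($PSL(2,q)$ for $q\in\{5,7,8,9,17\}$, $PSL(3,4)$, $Sz(8)$, $Sz(32)$, $M_{10}$, and certain $O_2$-extensions). One then discards every entry that fails to have all Sylow subgroups elementary abelian; only $PSL(2,5)$ survives among the nonsolvable ones, and among the solvable ones only Frobenius groups with elementary abelian kernel and prime-order complement. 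No Gruenberg--Kegel normal series, and no case-by-case analysis of simple groups via \cite{VaV}, is needed.

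Your route has two genuine gaps. First, your elimination of the $2$-Frobenius case is backwards. You assert that in $G=KLM$ the primes of $K$ and of $M$ are distinct, so a commuting pair forces composite order. In fact, under your condition~(a) the opposite is true: $K$ and $L$ are forced to be $p$- and $q$-groups respectively, while $M$ is a Frobenius complement all of whose elements have prime order, hence $M\cong C_r$; and since $L\langle m\rangle$ acts on $\Omega_1(Z(K))$ with $L$ fixed-point-free, any $m\in M$ centralises some nontrivial element of $K$, forcing $r=p$ (else composite order). So $\pi(K)=\pi(M)=\{p\}$ always, and your prime-graph edge never materialises. What actually works is your fact~(b): the Sylow $p$-subgroup $KM$ must be abelian, whence $M\le C_G(K)\le C_G(F(G))\le F(G)=K$, a contradiction. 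This is the paper's one-line remark that ``$F(G)$ must be a Sylow subgroup''. Second, your nonsolvable reduction to a simple group and the subsequent identification of that group as $A_5$ via an edgeless $GK(G)$ is left as a programme (and you flag it as the main obstacle). That programme is precisely the content of the CP-group classification in the simple case; citing \cite{DelWu} replaces all of it.

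On the positive side, your treatment of the condition on $A$ and of the converse is more explicit than the paper's: you correctly argue that the absence of edges forces each $P\setminus\{1\}$ to be a single $A$-orbit, hence a single $N_A(P)$-orbit since any $a\in A$ fusing two elements of the self-centralising $P$ must normalise $P$; and that conversely the TI, self-centralising Sylow structure in both $PSL(2,5)$ and the listed Frobenius groups, together with the orbit condition on $A$, prevents all edges while $|\pi(G)|\ge 2$ guarantees more than one vertex.
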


\begin{proof}
By hypothesis for any nonidentity element $ x\in G$, the vertex $x^{A}$ is an isolated vertex. By
Proposition 4.1 $x$ is of prime order and its centralizer in $G$ is a Sylow
subgroup of $G$ which is elementary abelian. Therefore $G$ is a CP-group,
that is, every element in $G$ is of prime power order, in which every Sylow
subgroup is elementary abelian. Furthermore if $P$ is a Sylow subgroup of $G$
and $1\neq g\in P$ then for any $g\neq h\in P\setminus \{1\}$ there exists $%
a\in A$ such that $g^{a}=h$, and hence $P^{a}=C_{G}(b)=P$ which shows that $%
N_{A}(P)$ acts transitively on $P\setminus \{1\}$ as claimed.

The structure of a CP-group $E$ is known (see \cite{DelWu}) and one of the
following holds:

\noindent $(i)$ $E$ is a $p$-group for some prime $p;$\\
$(ii)$ $E$ is a Frobenius group with $\left\vert \pi (E)\right\vert =2;$%
\\
$(iii)$ $E$ is a $2$-Frobenius group with $\left\vert \pi (E)\right\vert =2;$%
\\
$(iv)$ $E$ is isomorphic to one of the following groups: $PSL(2,q)$ for $q\in
\{5,7,8,9,17\}$ or $PSL(3,4)$ or $Sz(8)$ or $Sz(32)$ or $M_{10}$ or $%
O_{2}(E)\neq 1$ , $E/O_{2}(E)$ is isomorphic to one of the following groups $%
PSL(2,q)$ for $q\in \{4,8\}$ or $Sz(8)$ or $Sz(32).$ Furthermore $O_{2}(E)$
is isomorphic to a direct sum of natural modules for $E/O_{2}(E).$

As $Z(G)=1$, $G$ is not a $p$-group. Also $G$ cannot be a $2$-Frobenius
group with $\left\vert \pi (G)\right\vert =2$ and elementary abelian Sylow
subgroups because otherwise $F(G)$ must be a Sylow subgroup. If $G$ is a
Frobenius group with $\pi (G)=2$ then the kernel and the complement must be
Sylow subgroups of $G$ and as they are elementary abelian the Frobenius
complement must be cyclic of prime order. Also one can observe that all
nonsolvable groups other than $PSL(2,5)$ in the list do not satisfy the
condition that each Sylow subgroup is elementary abelian.
\end{proof}

\section{when $\Gamma (G,A)$ is triangle free}

In this section we work under the hypothesis that $\Gamma (G,A)$ has no
triangles.

\begin{lemma}
For all nonidentity elements $x\in G,$ $\left\vert x\right\vert $ divides $%
p^{2}$ for some prime $p$.
\end{lemma}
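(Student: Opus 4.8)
The plan is to prove the contrapositive: if a nonidentity $x\in G$ has order $n$ that does not divide $p^2$ for any prime $p$, then I can exhibit a triangle in $\Gamma(G,A)$, contradicting the standing hypothesis of the section. The entire argument takes place inside the cyclic group $\langle x\rangle$, exploiting two elementary facts: any two powers of $x$ commute, and two elements lying in the same $A$-orbit must have equal order, since $A$ acts by automorphisms.

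The key observation is a divisor count. An integer $n>1$ divides $p^2$ for some prime $p$ precisely when $n$ is a prime or the square of a prime, that is, when $n$ has at most three positive divisors. Hence, if $n$ fails to divide every $p^2$, then $n$ has at least four positive divisors, and therefore at least three divisors $d_1>d_2>d_3>1$. For each $i$ the element $y_i=x^{n/d_i}$ has order $d_i$, so $y_1,y_2,y_3$ are three elements of $\langle x\rangle$ of pairwise distinct orders.

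First I would check that $y_1^A,y_2^A,y_3^A$ are three distinct vertices: elements of a single $A$-orbit share a common order, so distinct orders force distinct orbits. Next, since $y_1,y_2,y_3$ lie in the abelian group $\langle x\rangle$ they commute pairwise, and hence $y_i^A\sim y_j^A$ for all $i\ne j$ directly from the definition of adjacency, the commuting representatives being $y_i$ and $y_j$ themselves. Thus $\{y_1^A,y_2^A,y_3^A\}$ is a triangle in $\Gamma(G,A)$, which is the desired contradiction.

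If one prefers to avoid the divisor count, the same three elements can be produced by splitting into two cases: when $n=p^a$ with $a\ge 3$ one takes $x,x^p,x^{p^2}$, of orders $p^a,p^{a-1},p^{a-2}$; when $n$ is divisible by two distinct primes $p$ and $q$ one takes elements of orders $p$, $q$ and $pq$ inside $\langle x\rangle$. I do not anticipate a genuine obstacle here. The only points that require care are the two routine verifications just described, namely that three distinct orders yield three distinct vertices and that pairwise commuting representatives yield pairwise adjacency.
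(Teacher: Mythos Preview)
Your proof is correct and follows the same idea as the paper: exhibit three pairwise commuting powers of $x$ with pairwise distinct orders, hence pairwise distinct $A$-orbits, forming a triangle. The paper carries this out via exactly the two-case split you describe at the end (an element of order $pq$ yields the triangle $x^{A},(x^{p})^{A},(x^{q})^{A}$; an element of order $p^{3}$ yields $x^{A},(x^{p})^{A},(x^{p^{2}})^{A}$), so your divisor-count framing is simply a unified repackaging of the same argument.
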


\begin{proof} $\left\vert x\right\vert $ is a power of a prime for
any $1\neq x\in G;$ because otherwise there would be an element $x\in G$
such that $|x|=pq$ for distinct primes $p$ and $q.$ Then the vertices $%
x^{A},(x^{p})^{A},(x^{q})^{A}$ form a triangle, which is contradiction. Also
if $\left\vert x\right\vert =p^{3}$ for some prime $p$ then the vertices $%
x^{A},(x^{p})^{A},(x^{p^{2}})^{A}$ form a triangle.
\end{proof}

\begin{theorem}
If $\Gamma (G,A)$ is triangle free then $G$ is a CP-group. Furthermore if $G$
is nonsolvable then either $G$ is isomorphic to one of the simple groups $%
PSL(2,q)$ for some $q\in \{5,7,8,9\}$; or $PSL(3,4)$; or has a nontrivial
normal $2$-subgroup and $G/O_{2}(G)$ is isomorphic to $PSL(2,4)$ or $%
PSL(2,8) $. In the last case $O_{2}(G)$ is isomorphic to a direct sum of
natural modules of the group $G/O_{2}(G)$.
\end{theorem}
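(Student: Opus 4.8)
The plan is to establish the two assertions in turn, leaning on the structural results already developed. First I would prove that $G$ is a CP-group: by Lemma 5.1 every nonidentity element of $G$ has order dividing $p^2$ for some prime $p$, so in particular every element has prime power order, which is precisely the defining property of a CP-group. This gives us immediate access to the classification of CP-groups quoted in the proof of Theorem 4.3 (from \cite{DelWu}), which lists the possibilities $(i)$--$(iv)$. Since we are now in the nonsolvable case, cases $(i)$--$(iii)$ (the $p$-group, Frobenius, and $2$-Frobenius possibilities) are all solvable and can be discarded, leaving only the groups in the list $(iv)$.

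Next I would work through the simple groups appearing in $(iv)$ and eliminate those that are incompatible with the triangle-free hypothesis. The key additional constraint beyond being a CP-group is Lemma 5.1's sharper conclusion that element orders divide $p^2$; the candidate simple groups $PSL(2,17)$, $Sz(8)$, $Sz(32)$, and $M_{10}$ should be ruled out by exhibiting an element whose order is not of the form $p$ or $p^2$, or by directly producing a triangle in $\Gamma(G,A)$. For instance, one can search each group's spectrum (set of element orders): if some element order is divisible by two distinct primes, or equals a prime cube, Lemma 5.1 is violated. This should eliminate all the simple groups in $(iv)$ except $PSL(2,q)$ for $q\in\{5,7,8,9\}$ and $PSL(3,4)$, matching the stated conclusion. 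The analogous check applied to the quotients $E/O_2(E)$ in the non-simple branch of $(iv)$ should retain only $PSL(2,4)$ and $PSL(2,8)$ while discarding $Sz(8)$ and $Sz(32)$.

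For the final clause, the structure of $O_2(G)$ as a direct sum of natural modules is inherited directly from the corresponding assertion in the CP-group classification $(iv)$, where this module structure is already part of the stated dichotomy; so no new argument is needed there beyond transporting that conclusion.

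The main obstacle I expect is the case-by-case elimination in the second step: one must verify, for each excluded group, that its element orders genuinely force either a non-prime-power order, a prime-cube order, or more subtly a configuration of three pairwise-commuting orbit representatives of distinct prime orders forming a triangle. The subtlety is that the triangle-free hypothesis is on $\Gamma(G,A)$, which is a quotient of the commuting graph, so adjacency is easier to achieve than in $\Gamma(G)$ itself; consequently the cleanest route is to rely on Lemma 5.1's order restriction (which already encodes the triangle-free condition applied to the three orbits $x^A, (x^p)^A, (x^q)^A$ and to $x^A,(x^p)^A,(x^{p^2})^A$) rather than re-deriving adjacencies. Verifying that each of $PSL(2,5), PSL(2,7), PSL(2,8), PSL(2,9), PSL(3,4)$ (and the two surviving quotients) actually has all element orders dividing some $p^2$ — and hence survives — is a finite but careful spectrum computation that I would carry out by consulting the element orders of these small groups.
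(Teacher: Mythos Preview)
Your overall architecture matches the paper's: invoke Lemma~5.1 to get the CP-property, pass to the Delgado--Wu list, discard the solvable branches, and then prune the remaining candidates. Where your plan breaks down is in the pruning step for the Suzuki groups. You propose that the ``cleanest route'' is to rely solely on Lemma~5.1's restriction that every element order divides some $p^{2}$, but this restriction does \emph{not} eliminate $Sz(8)$ or $Sz(32)$: the spectrum of $Sz(8)$ is $\{1,2,4,5,7,13\}$ and that of $Sz(32)$ is $\{1,2,4,5,25,31,41\}$, and every entry in each list divides a prime square. So the element-order test that handles $PSL(2,17)$ and $M_{10}$ (via their elements of order $8$) is genuinely insufficient here, and your hedge ``or by directly producing a triangle'' is not an optional fallback but the only route.

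The paper's eliminations of the Suzuki groups are accordingly more delicate than anything you sketch. For $Sz(32)$ one observes that a Sylow $31$-subgroup $R$ is self-centralizing and $N_{\mathrm{Aut}(Sz(32))}(R)/R\cong\mathbb{Z}_{10}$, so the $30$ nontrivial elements of $R$ fall into at least three $A$-orbits for any $A$; since $R$ is abelian these orbits form a triangle. For $Sz(8)$ (both as a simple group and as a quotient $G/O_{2}(G)$) one works inside a Sylow $2$-subgroup $T$: the $N_{\mathrm{Aut}(Sz(8))}(T)$-orbits on elements of order $4$ are represented by some $x$ and $x^{-1}$, and together with a central involution $y\in Z(T)$ they yield the triangle $\{x^{A},(x^{-1})^{A},y^{A}\}$. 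Note that neither triangle consists of orbits of elements of pairwise distinct prime orders, so your suggested ``three pairwise-commuting orbit representatives of distinct prime orders'' heuristic would also miss them. Your proposal needs these two concrete constructions (or equivalents) to close the argument.
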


\begin{proof}
It follows from Lemma 5.1 that $G$ is a $CP$-group. Therefore its structure is
well known (see the proof of Theorem 4.2). One can observe that $%
S=G/O_{2}(G) $ is not isomorphic to $Sz(32),M_{10}$ or $PSL(2,17),$ because
the first one of these groups contains a Sylow $31$-subgroup $R$ of order $%
31 $ with the property that $C_{S}(R)=R$ and $
N_{Aut(Sz(32))}(R)/C_{Sz(32)}(R)\cong \mathbb{Z}_{10}$ by \cite{Atl} so that there are at
least $3$ $A$-orbits of elements of order $31$ in $G$ which form a triangle,
and the last two contain elements of order $8$, which are impossible by Lemma 5.1.

We complete now the proof by showing that $\overline{G}%
=G/O_{2}(G)$ cannot be isomorphic to $Sz(8)$. Suppose the contrary. Let $T$
be a Sylow $2$-subgroup of $\overline{G}$. Then $Z(T)$ is an elementary abelian
group of order $8$ and $T\setminus Z(T)$ is the subset of $T$ consisting of
elements of order $4.$ Furthermore $N_{Aut(Sz(8))}(T)=TRS$ where $RS$ is a
Frobenius group of order $21$ with kernel $R$ and complement $S.$ $R$ acts
transitively on the set of nonidentity elements of $Z(T)$ and also on $
T/Z(T) $ and $C_{T}(S)=\left\langle x\right\rangle \cong \mathbb{Z}_{4}$. If 
$u$ and $v$ are two elements of order $4$ of $T$ and there exists $a\in A$
with $u^{a}=v$ then $(u^{2})^{a}=v^{2}$ and $u^{2},v^{2}\in Z(T)\setminus
\{1\}$ implying that $a\in N_{A}(T).$ But the number of $N_{Aut(Sz(8))}(T)$
-orbits on $T\setminus Z(T)$ is $2$, each of length $28$ and are represented
by an element $x$ of order $4$ and its inverse. Observe that $
C_{T}(x)=Z(T)\left\langle x\right\rangle .$ So we have a triangle $
\{x^{A},(x^{-1})^{A},y^{A}\} $ where $1\neq y\in Z(T)$ in $\Gamma (Sz(8),A)$
for any $A\leq Aut(Sz(8)).$ Therefore if $O_{2}(G)=1$ we have a triangle in $
\Gamma (G,A)$ for any $A\leq Aut(G),$ which is not the case.

Suppose now that $O_{2}(G)\neq 1.$ Let $u\in G$ with $uO_{2}(G)=x$ and $
1\neq z\in O_{2}(G)\cap C_{G}(u).$ Then $u^{A},(u^{-1})^{A},z^{A}$ form a
triangle for any $A\leq Aut(G)$ which is impossible.
\end{proof}

\begin{remark} We want explicitly remark that the above theorem does not say anything about the existence of $A\leq AutG$ such that $\Gamma(G,A)$ is triangle free for a given group $G$. It is an independent and interesting project to classify all such pairs $(G,A)$. 
\end{remark}

\section*{Acknowledgements}

We thank N. V. Maslova and A.\,V.~Vasil'ev for their valuable comments on our questions about the Gr\" unberg-Kegel graph.

\end{document}